\newcommand{\eqdef }{\overset{\mbox{\tiny{def}}}{=}}
\DeclareMathOperator*{\esssup}{ess\,sup}
\newcommand*\Laplace{\mathop{}\!\mathbin\bigtriangleup}
\newtheorem{remark}[theorem]{Remark}
\numberwithin{equation}{section}
\numberwithin{theorem}{section}
\title{Sobolev Training for Physics Informed Neural Networks}
\author{
Hwijae Son\footnotemark[1]\ 
\and Jin Woo Jang \footnotemark[2]\ 
\and Woo Jin Han\footnotemark[2]\ 
\and Hyung Ju Hwang\footnotemark[2]\ \footnotemark[3]\ \footnotemark[4]}
\begin{document}
\nolinenumbers

\maketitle

\renewcommand{\thefootnote}{\fnsymbol{footnote}}
\footnotetext[1]{Stochastic Analysis and Application Research Center, Korea Advanced Institute of Science and Technology, Daejeon, Republic of Korea (\email{son9409@kaist.ac.kr}).}
\footnotetext[2]{Department of Mathematics, Pohang University of Science and Technology, Pohang 790-784, Republic of Korea (\email{jangjw@postech.ac.kr},\email{wjhan@postech.ac.kr},\email{hjhwang@postech.ac.kr}).}
\footnotetext[3]{Graduate School of Artificial Intelligence, Pohang University of Science and Technology, Pohang 790-784, Republic of Korea (\email{hjhwang@postech.ac.kr}).}
\footnotetext[4]{Corresponding Author}
\renewcommand{\thefootnote}{\arabic{footnote}}

% REQUIRED
\begin{abstract}
Physics Informed Neural Networks (PINNs) is a promising application of deep learning. The smooth architecture of a fully connected neural network is appropriate for finding the solutions of PDEs; the corresponding loss function can also be intuitively designed and guarantees the convergence for various kinds of PDEs. However, the rate of convergence has been considered as a weakness of this approach. This paper proposes Sobolev-PINNs, a novel loss function for the training of PINNs, making the training substantially efficient. Inspired by the recent studies that incorporate derivative information for the training of neural networks, we develop a loss function that guides a neural network to reduce the error in the corresponding Sobolev space. Surprisingly, a simple modification of the loss function can make the training process similar to \textit{Sobolev Training} although PINNs is not a fully supervised learning task. We provide several theoretical justifications that the proposed loss functions upper bound the error in the corresponding Sobolev spaces for the viscous Burgers equation and the kinetic Fokker--Planck equation. We also present several simulation results, which show that compared with the traditional $L^2$ loss function, the proposed loss function guides the neural network to a significantly faster convergence. Moreover, we provide the empirical evidence that shows that the proposed loss function, together with the iterative sampling techniques, performs better in solving high dimensional PDEs.
\end{abstract}

% REQUIRED
\begin{keywords}
  Physics Informed Neural Networks, Sobolev Training, Partial Differential Equations, Neural Networks
\end{keywords}

% REQUIRED
\begin{AMS}
  68T07, 65M99, 35Q84
\end{AMS}

\thispagestyle{empty}

\section{Introduction} \label{sec1}
Deep learning has achieved remarkable success in many scientific fields, including computer vision and natural language processing. In addition to engineering, deep learning has been successfully applied to the field of scientific computing. Particularly, the use of neural networks for the numerical integration of Partial Differential Equations (PDEs) has emerged as a new important application of the deep learning. 

Being a universal approximator \cite{cybenko1989approximation, hornik1989multilayer, li1996simultaneous}, a neural network can approximate solutions of complex PDEs. To find the neural network solution of a PDE, a neural network is trained on a domain wherein the PDE is defined. Training a neural network comprises the following: feeding the input data through forward pass and minimizing a predefined loss function with respect to the network parameters through backward pass. In the traditional supervised learning setting, the loss function is designed to guide the neural network to generate the same output as the target data for the given input data. However, while solving PDEs using neural networks, the target values that correspond to the analytic solution are not available. One possible way to guide the neural network to produce the same output as the solution of the PDE is to penalize the neural network to satisfy the PDE itself. Early approaches, for instance, \cite{lagaris1998artificial, lagaris2000neural}, proposed to train a trial function that exactly satisfies the boundary conditions on a set of predefined grid points. Later, \cite{sirignano2018dgm, berg2018unified, raissi2019physics, hwang2020trend} reported for various kinds of problems involving PDEs.

Unlike the traditional mesh-based schemes including the Finite Difference Method (FDM) and the Finite Element Method (FEM), neural networks are inherently mesh-free function-approximators. Advantageously, as mesh-free approximators, neural networks can be applied to solve high-dimensional PDEs \cite{sirignano2018dgm} and approximate the solutions of PDEs on complex geometries \cite{berg2018unified}. Recently, \cite{hwang2020trend} showed that, in the continuous loss setting, the neural networks could approximate the solutions of kinetic Fokker--Planck equations under not only various kinds of kinetic boundary conditions but also several irregular initial conditions. Moreover, they showed that the neural networks automatically approximate the macroscopic physical quantities including the kinetic energy, the entropy, the free energy, and the asymptotic behavior of the solutions. Additionally, \cite{hwang2021lagrangian} reported a constrained optimization formulation to impose physical constraints to PINNs for the Fokker--Planck equation and the Boltzmann equation. Further issues including the inverse problem were investigated by \cite{raissi2019physics, NHM}.

Although the neural network approach can be used to solve several complex PDEs in various kinds of settings, it requires relatively high computational cost compared to the traditional mesh-based schemes in general. To resolve this issue, we propose a novel loss function using Sobolev norms in this paper. Inspired by a recent study that incorporated derivative information for the training of neural networks \cite{czarnecki2017sobolev}, we develop a loss function that efficiently guides neural networks to find the solutions of PDEs. We prove that the $H^1 \text{ and } H^2$ norms of the approximation errors converge to zero as our loss functions tend to zero for the 1-D Heat equation, the 1-D viscous Burgers equation, and the 1-D kinetic Fokker--Planck equation. Moreover, we show via several simulation results that the number of epochs to achieve a certain accuracy is significantly reduced as the order of derivatives in the loss function gets higher, provided that the solution is smooth. This study might pave the way for overcoming the issue of high computational cost when solving PDEs using neural networks.

The main contributions of this work are threefold: 1) We propose Sobolev-PINNs, a novel training framework with new loss functions that enables the \textit{Sobolev Training} of PINNs. 2) We prove that the proposed Sobolev-PINNs guarantee the convergence of PINNs in the corresponding Sobolev spaces although it is not a supervised learning task. 3) We empirically demonstrate the effect of \textit{Sobolev Training} for several regression problems and the improved performances of Sobolev-PINNs in solving several PDEs including the heat equation, Burgers' equation, the Fokker--Planck equation, and high-dimensional Poisson equation. 

\section{Related works} \label{sec2}
Training neural networks to approximate the solutions of PDEs has been intensively studied over the past decades. For example, \cite{lagaris1998artificial, lagaris2000neural} used neural networks to solve Ordinary Differential Equations (ODEs) and PDEs on a predefined set of grid points. Subsequently, \cite{sirignano2018dgm} proposed a method to solve high-dimensional PDEs by approximating the solution using a neural network. They focused on the fact that the traditional finite mesh-based scheme becomes computationally intractable when the dimension becomes high. However, because neural networks are mesh-free function-approximators, they can solve high-dimensional PDEs by incorporating mini-batch sampling. Furthermore, the authors showed the convergence of the neural network to the solution of quasilinear parabolic PDEs under certain conditions. 

Recently, \cite{raissi2019physics} reported that one can use observed data to solve PDEs using Physics-Informed Neural Networks (PINNs). Notably, PINNs can solve a supervised regression problem on observed data while satisfying any physical properties given by nonlinear PDEs. A significant advantage of PINNs is that the data-driven discovery of PDEs, also called the inverse problem, is possible with a small change in the code. The authors provided several numerical simulations for various types of nonlinear PDEs including the Navier--Stokes equation and Burgers' equation. The first theoretical justification for PINNs was provided by \cite{shin2020convergence}, who showed that a sequence of neural networks converges to the solutions of linear elliptic and parabolic PDEs in $L^2$ sense as the number of observed data increases. There also exists a study aiming to enhance the convergence of PINNs \cite{van2020optimally}.

A line of research that aims to deal with the stability and convergence issue of PINNs is recently drawing attention. \cite{mcclenny2020self} proposed a self-adaptive loss balancing algorithm that gives more weight to the region where the solution exhibits sharp transition. \cite{wang2021understanding} claimed that the stiff gradient statistics causes an imbalance in the back-propagation and proposed a learning rate annealing algorithm to resolve it. \cite{wang2022and} investigated the training dynamics of PINNs in the neural tangent kernel regime and proposed an adaptive loss balancing algorithm based on the eigenvalues of the tangent kernels. Another branch of works considered the training of PINNs as multi-objective learning (See, \cite{van2020optimally, bischof2021multi, rohrhofer2021pareto} for more information).

Additionally, several works related deep neural networks with PDEs but not by the direct approximation of the solutions of PDEs. For instance, \cite{long2018pde} attempted to discover the hidden physics model from data by learning differential operators. A fast, iterative PDE-solver was proposed by learning to modify each iteration of the existing solver \cite{hsieh2019learning}. A deep Backward Stochastic Differential Equation (BSDE) solver was proposed and investigated in \cite{weinan2017deep, han2018solving} for solving high-dimensional parabolic PDEs by reformulating them using BSDE.

The main strategy of the present study is to leverage derivative information while solving PDEs via neural networks. The authors of \cite{czarnecki2017sobolev} first proposed \textit{Sobolev Training} that uses derivative information of the target function when training a neural network by slightly modifying the loss function. They showed that \textit{Sobolev Training} had lower sample complexity than regular training, and therefore it is highly efficient in many applicable fields, such as regression and policy distillation problems. We adopt the concept of \textit{Sobolev Training} to develop Sobolev-PINNs, a novel framework for the efficient training of a neural network for solving PDEs.

\section{Loss Functions}\label{sec3}
We consider the following Cauchy problem of PDEs:
\begin{align}
Pu &= f,\text{  }(t,x)\in [0,T] \times \Omega, \label{ge}\\
Iu &= g,\text{  }(t,x)\in \{0\} \times \Omega, \label{ic}\\
Bu &= h,\text{  }(t,x)\in [0,T] \times \partial\Omega, \label{bc}
\end{align} where $P$ denotes a differential operator; $I$ and $B$ denote the initial and boundary operators, respectively; $f$, $g$, and $h$ denote the inhomogeneous term, and initial and boundary data, respectively. 
In most studies that reported the neural network solutions of PDEs, a neural network was trained on uniformly sampled grid points $\{(t_i, x_j)\}_{i, j=1}^{N_t, N_x} \in [0, T] \times \Omega$, which were completely determined before training. One of the most intuitive ways to make the neural network satisfy PDEs \eqref{ge}--\eqref{bc} is to minimize the following loss functional:
\begin{equation}
    \mathcal{L}_(u_{nn};p) = \|Pu_{nn}-f\|_{L^p([0,T]\times\Omega)}^p + \|Iu_{nn}-g\|_{L^p(\Omega)}^p + \|Bu_{nn}-h\|_{L^p([0,T]\times\partial\Omega)}^p,\nonumber
\end{equation}
where $u_{nn}$ denotes the neural network and $p = 1 \text{ or } 2$, as they have been the most commonly used exponents in regression problems in previous studies. Evidently, an analytic solution $u$ satisfies $\mathcal{L}(u) = 0$, and thus one can conceptualize a neural network that makes $\mathcal{L}(u_{nn})=0$ a possible solution of PDEs \eqref{ge}--\eqref{bc}. This statement is in fact proved for second-order parabolic equations with the Dirichlet boundary condition in \cite{NHM}, and for the Fokker--Planck equation with inflow and specular reflective boundary conditions in \cite{hwang2020trend}. Both the proofs are based on the following inequality:
\begin{equation}
    \|u-u_{nn}\|_{L^{\infty}(0,T; L^2(\Omega))} \leq C \mathcal{L}(u_{nn};2), \nonumber
\end{equation} for some constant $C$, which states that minimizing the loss functional implies minimizing the approximation error. 

The main concept behind \textit{Sobolev Training} is to minimize the error between the output and the target function, and that between the derivatives of the output and those of the target function. However, unlike the traditional supervised regression problem, neither the target function nor its derivative is provided while solving PDEs via neural networks. Thus, a special treatment is required to apply \textit{Sobolev Training} for solving PDEs using neural networks. In this and the following sections, we propose several loss functions and prove that they guarantee the convergence of the neural network to the solution of a given PDE in the corresponding Sobolev space. Therefore, the proposed loss functions play similar roles to those in \textit{Sobolev Training}.

We define the loss function that depends on the Sobolev norm $W^{k,p}$ as follows:
\begin{align}
    \mathcal{L}_{GE}(u_{nn}; k,p,l,q) &= \left\| \| P (u_{nn}(t,\cdot)) - f(t,\cdot)\|_{W^{l,q}(\Omega)}^q \right\|_{W^{k,p}([0,T])}^p, \label{loss_GE}\\ 
    \mathcal{L}_{IC}(u_{nn}; l,q) &= \|Iu_{nn}(t,x)-g(x)\|_{W^{l,q}(\Omega)}^q, \label{loss_IC}\\ 
    \mathcal{L}_{BC}(u_{nn}; k,p,l,q) &= \left\| \| Bu_{nn}(t, \cdot) - h(t, \cdot)\|_{W^{l,q}(\partial \Omega)}^q \right\|_{W^{k,p}([0,T])}^p. \label{loss_BC} 
\end{align}

\begin{remark}\label{rmk1}
    Here, the total loss with zero derivatives \begin{equation*}\mathcal{L}_{TOTAL}^{(0)}(u_{nn}) = \mathcal{L}_{GE}(u_{nn}; 0,2,0,2) + \mathcal{L}_{IC}(u_{nn}; 0,2) + \mathcal{L}_{BC}(u_{nn}; 0,2,0,2)\end{equation*} coincides with the traditional $L^2$ loss function employed by \textup{\cite{sirignano2018dgm, berg2018unified, raissi2019physics, hwang2020trend}}.
\end{remark}

When we train a neural network, the loss functions \eqref{loss_GE}--\eqref{loss_BC} are computed by Monte-Carlo approximation. Because the grid points are uniformly sampled, the loss functions are approximated as follows:
\begin{align}
    & \mathcal{L}_{GE}(u_{nn}; k,p,l,q) \approx \nonumber \\ & \qquad  \frac{T|\Omega|}{N_t N_x} \sum_{|\beta|\leq k} \sum_{i=1}^{N_t} \left|\frac{d^{\beta}}{dt^{\beta}} \sum_{|\alpha| \leq l}\sum_{j =1}^{N_x} \left|D^{\alpha} P(u_{nn}(t_i, x_j)) - D^{\alpha}f(t_i,x_j)\right|^q\right|^p, \nonumber\\
    & \mathcal{L}_{IC}(u_{nn}; l,q) \approx \frac{|\Omega|}{N_x} \sum_{|\alpha| \leq l}\sum_{j=1}^{N_x} |D^{\alpha} u_{nn}(0,x_j) - D^{\alpha} g(x_j)|^q , \nonumber\\
    & \mathcal{L}_{BC}(u_{nn}; k,p,l,q) \approx  \nonumber \\ 
    & \qquad \frac{T|\partial\Omega|}{N_t N_B} \sum_{|\beta|\leq k} \sum_{i=1}^{N_t} \left|\frac{d^{\beta}}{dt^{\beta}} \sum_{|\alpha| \leq l}\sum_{x_j \in \partial\Omega} \left|D^{\alpha} u_{nn}(t_i, x_j) - D^{\alpha} h(t_i, x_j)\right|^q\right|^p, \nonumber
\end{align}
where $\alpha$ and $\beta$ denote the conventional multi-indexes, and $D$ denotes the spatial derivatives. 

\section{Theoretical Results}\label{sec4}
In this section, we theoretically validate our claim that our loss functions guarantee the convergence of the neural network to the solution of a given PDE in the corresponding Sobolev spaces, and that they play a similar role to those in \textit{Sobolev Training} while solving PDEs via neural networks. Throughout this section, we will denote the strong solution of each equation by $u$, neural network solution by $u_{nn}$, and Sobolev spaces $W^{1,2}$ and $W^{2,2}$ by $H^1$ and $H^2$, respectively. We also assume that $u_{nn} \in C^{\infty}$ by considering the hyperbolic tangent function as a nonlinear activation function. All the proofs are provided in section \ref{sec7}.

\subsection{The heat equation and Burgers' equation}
We define the following three total loss functions for the heat equation and Burgers' equation:
\begin{align}
    \begin{split}
    \mathcal{L}_{TOTAL}^{(0)}(u_{nn}) &= \mathcal{L}_{GE}(u_{nn}; 0,2,0,2) + \mathcal{L}_{IC}(u_{nn};0,2) + \mathcal{L}_{BC}(u_{nn};0,2,0,2),
    \end{split}\label{loss_hb_0}\\ 
    \begin{split}
    \mathcal{L}_{TOTAL}^{(1)}(u_{nn}) &= \mathcal{L}_{GE}(u_{nn}; 0,2,0,2) + \mathcal{L}_{IC}(u_{nn};1,2) + \mathcal{L}_{BC}(u_{nn};0,2,0,2),
    \end{split}\label{loss_hb_1}\\ 
    \begin{split}
    \mathcal{L}_{TOTAL}^{(2)}(u_{nn}) &= \mathcal{L}_{GE}(u_{nn}; 1,2,0,2) + \mathcal{L}_{IC}(u_{nn};2,2) + \mathcal{L}_{BC}(u_{nn};0,2,0,2).
    \end{split}\label{loss_hb_2} 
\end{align}
We then obtain the following convergence theorem:
\begin{theorem}\label{theorem_HB}
 For the following 1-D heat and Burgers' equations:
\begin{table}[h]
\renewcommand{\arraystretch}{1.3}
\begin{center}
\begin{tabular}{c|c} 
        \hline
        The heat equation & Burgers' equation  \\ \hline
        {$\!\begin{aligned} 
               u_t - u_{xx} &= 0 \text{ in } (0, T] \times \Omega, \nonumber \\
                u(0,x) &= u_0(x) \text{ on } \Omega, \nonumber \\
                u(t,x) &= 0 \text{ on } [0, T] \times \partial\Omega, \nonumber \end{aligned}$} & 
        {$\!\begin{aligned} 
               u_t + uu_{x} - \nu u_{xx} &= 0 \text{ in } (0, T] \times \Omega, \nonumber \\
                u(0,x) &= u_0(x) \text{ on } \Omega, \nonumber \\
                u(t,x) &= 0 \text{ on } [0, T] \times \partial\Omega, \nonumber \end{aligned}$} \\ \hline
\end{tabular}
\end{center}
\end{table}

there hold, provided that $u_{nn}$ is smooth,
\begin{align}
    \max_{0\leq t \leq T} \|u(t)-u_{nn}(t)\|_{L^2(\Omega)} \rightarrow 0 &\text{  as  } \mathcal{L}_{TOTAL}^{(0)} \rightarrow 0, \nonumber\\
    \esssup_{0\leq t \leq T} \|u(t)-u_{nn}(t)\|_{H^1_0(\Omega)} \rightarrow 0 &\text{  as  } \mathcal{L}_{TOTAL}^{(1)} \rightarrow 0, \nonumber\\
    \esssup_{0\leq t \leq T} \|u(t)-u_{nn}(t)\|_{H^2(\Omega)} \rightarrow 0 &\text{  as  } \mathcal{L}_{TOTAL}^{(2)} \rightarrow 0. \nonumber
\end{align}
\end{theorem}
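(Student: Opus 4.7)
The plan is to set $w := u - u_{nn}$ and derive the error PDE satisfied by $w$. Because $u$ solves the PDE exactly, the inhomogeneous terms in this error equation are precisely $-(Pu_{nn}-f)$ in the interior, $-(Iu_{nn}-g)$ at $t=0$, and $-(Bu_{nn}-h)$ on $\partial\Omega$; these are exactly the three residuals controlled by $\mathcal{L}_{GE}$, $\mathcal{L}_{IC}$, and $\mathcal{L}_{BC}$. I will then run a Sobolev-space energy estimate of the appropriate order, use Cauchy--Schwarz, Young, trace and Poincaré inequalities to rewrite every term either as an absorbable multiple of the dissipation or as a loss contribution, and close the argument with Grönwall's inequality in $t$. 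The $C^\infty$ smoothness of $u_{nn}$ legitimizes all integrations by parts and differentiations needed along the way.

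For the zeroth-order bound with $\mathcal{L}_{TOTAL}^{(0)}$, I test the error equation against $w$ and integrate over $\Omega$. The boundary flux $[w_x w]_{\partial\Omega}$ arising from integration by parts is controlled by a trace inequality applied to $w$, with $w|_{\partial\Omega}$ supplying the boundary loss; for Burgers' the extra term splits as $wu_x + u_{nn}w_x$, each piece absorbable using $L^\infty$ bounds on $u$ and on $u_{nn,x}$ together with Young's inequality. Grönwall in $t$ with initial datum $\|w(0,\cdot)\|_{L^2}^2 = \mathcal{L}_{IC}(u_{nn};0,2)$ yields the first claim.

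For the $H^1$ bound, I test the error equation against $-w_{xx}$ instead; this produces $\tfrac12\tfrac{d}{dt}\|w_x\|_{L^2}^2 + \|w_{xx}\|_{L^2}^2$ on the left after handling a boundary flux of the form $[w_t w_x]_{\partial\Omega}$, which I reduce to $\mathcal{L}_{BC}$ by a trace bound on $w_x$ combined with the fact that $w_t|_{\partial\Omega}$ is the time derivative of the boundary residual. The Grönwall loop is initiated by $\|w_x(0,\cdot)\|_{L^2}^2$, which is precisely what raising $l$ from $0$ to $1$ in $\mathcal{L}_{IC}$ provides. For the $H^2$ bound I exploit the PDE identity $w_{xx} = w_t - (Pu_{nn}-f) + (\text{nonlinear})$ to reduce $\|w_{xx}\|_{L^2}$ control to an $L^2$ estimate on $w_t$; I then differentiate the error equation once in $t$, test against $w_t$, and derive a Grönwall inequality for $\|w_t\|_{L^2}^2$ whose forcing is $\|\partial_t(Pu_{nn}-f)\|_{L^2}^2$ (exactly the $k=1$ content of $\mathcal{L}_{GE}$) and whose initial datum $\|w_t(0,\cdot)\|_{L^2}^2$ is reconstructed, via evaluating the error equation at $t=0$, from $\|w_{xx}(0,\cdot)\|_{L^2}^2$ and $\|(Pu_{nn}-f)(0,\cdot)\|_{L^2}^2$, both controlled by the $l=2$ initial loss and the interior loss respectively.

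The main obstacle is the Burgers' nonlinearity once we pass to the $H^1$ and $H^2$ levels: differentiating the error equation produces cross terms such as $u_{nn,x}w_{xx}$ and $w_x u_{xx}$ whose $L^2$ norms cannot be absorbed into the dissipation without a priori $L^\infty$ bounds on $u_{nn}$, $u_{nn,x}$ and, at the $H^2$ level, $u_{nn,xx}$. In one space dimension these follow from the Sobolev embedding $H^1\hookrightarrow L^\infty$ applied to $u_{nn}$ and its derivatives, but one must verify that the resulting constants remain uniform as the loss shrinks to zero (they do, since they are eventually dominated by the corresponding norms of the strong solution $u$ plus a vanishing error). A secondary, more bookkeeping hurdle is the systematic reduction of every boundary flux term appearing in the higher-order energy identities, via trace inequalities and commuting time derivatives through the boundary operator, back to a component of $\mathcal{L}_{BC}$ so that no unaccounted contribution remains.
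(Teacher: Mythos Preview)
Your energy-method skeleton---testing the error equation against $w$, then $-w_{xx}$, then differentiating in $t$ and testing against $w_t$, closing each time with Gr\"onwall and reconstructing $w_t(0,\cdot)$ from the equation---is exactly the route the paper takes for Burgers' equation (for the heat equation the paper simply quotes the parabolic regularity theorem in Evans after the same reduction). Your splitting of the nonlinearity and your remarks about uniform $L^\infty$ constants via the 1D embedding are also in line with what the paper does.

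The substantive difference, and the place where your plan has a gap, is the boundary treatment. The paper does \emph{not} attempt to estimate the boundary fluxes: instead it replaces $u_{nn}$ by $B(x)\,u_{nn}$ with $B$ smooth, $B=0$ on $\partial\Omega$ and $B\neq 0$ in $\Omega$, so that $w=u-u_{nn}$ vanishes identically on $\partial\Omega$. Every boundary term ($[w w_x]$, $[w_t w_x]$, $[w_t w_{xt}]$, etc.) is then exactly zero and $\mathcal{L}_{BC}$ plays no role in the estimates. Your alternative---carrying the boundary residual through and absorbing the fluxes via trace bounds and $\mathcal{L}_{BC}$---breaks down at the $H^1$ step: the flux $[w_t w_x]_{\partial\Omega}$ contains $w_t|_{\partial\Omega}=\partial_t(\text{boundary residual})$, and $\mathcal{L}_{BC}(u_{nn};0,2,0,2)$ involves only $k=0$, i.e.\ no time derivative, so it does \emph{not} control $\int_0^T |w_t(t,a)|^2\,dt$. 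A trace bound on $w_x$ alone cannot save this, since after Young's inequality you are still left with $|w_t(t,a)|^2$ on the right, and assuming merely that $u_{nn}$ is smooth gives boundedness of this quantity but not smallness. The same issue recurs, one order worse, at the $H^2$ step. Adopting the paper's $B(x)$ trick removes the difficulty in one line.
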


\begin{proof}
     Proofs are provided in Theorem \ref{heat_proof} for the heat equation, and Theorem \ref{burgers_proof} for Burgers' equation
\end{proof}

\subsection{The Fokker--Planck equation}
For the Fokker--Planck equation, we need additional parameters for a new input variable $v$. We define the following two total loss functions for the Fokker--Planck equation:
\begin{align}
    \begin{split}
        \mathcal{L}_{TOTAL}^{(0;\textit{FP})}(u_{nn}) &= \mathcal{L}_{GE}(u_{nn}; 0,2,0,2,0,2) \\&+ \mathcal{L}_{IC}(u_{nn};0,2,0,2) + \mathcal{L}_{BC}(u_{nn};0,2,0,2,0,2),
    \end{split}\label{loss_fp_0}\\
    \begin{split}
        \mathcal{L}_{TOTAL}^{(1;\textit{FP})}(u_{nn}) &= \mathcal{L}_{GE}(u_{nn}; 0,2,1,2,1,2) \\&+ \mathcal{L}_{IC}(u_{nn};1,2,1,2) + \mathcal{L}_{BC}(u_{nn};0,2,0,2,0,2).
    \end{split}\label{loss_fp_1}
\end{align}
We then have the following convergence theorem:
\begin{theorem} \label{theorem_FP}
     For the 1-D Fokker--Planck equation with the periodic boundary condition:\begin{align}
    u_t + vu_x - \beta (v u)_v  - q u_{vv} &= 0,\text{ for }(t,x,v)\in [0,T] \times [0, 1] \times \mathbb{R}, \nonumber\\
    u(0,x,v) &= u_0(x,v), \text{ for }(x,v)\in  [0, 1] \times \mathbb{R}, \nonumber\\
    \partial^\alpha_{t,x,v}u(t,1,v) - \partial^\alpha_{t,x,v}u(t, 0, v) &= 0,\text{ for }(t,v)\in [0,T]\times \mathbb{R}, \nonumber
    \end{align}
    there hold, under assumptions \eqref{deriv 1} and \eqref{deriv 2}, 
    \begin{align}
        \sup_{0\leq t \leq T} \|u(t)-u_{nn}(t)\|_{L^2(\Omega \times [-V,V])} \rightarrow 0 &\text{  as  } \mathcal{L}_{TOTAL}^{(0;\text{FP})} \rightarrow 0, \nonumber\\
        \sup_{0\leq t \leq T} \|u(t)- u_{nn}(t)\|_{H^1(\Omega; L^2([-V,V]))} \rightarrow 0 &\text{  as  } \mathcal{L}_{TOTAL}^{(1;\text{FP})} \rightarrow 0. \nonumber
    \end{align}
\end{theorem}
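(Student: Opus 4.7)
The plan is to run an energy estimate on the error $e := u - u_{nn}$, which by construction satisfies the Fokker--Planck equation with source term $-r$, where $r := P u_{nn}$ is the equation residual. Explicitly,
\begin{equation*}
e_t + v e_x - \beta(v e)_v - q e_{vv} = -r \quad \text{on } [0,T]\times[0,1]\times\mathbb{R},
\end{equation*}
with periodic boundary in $x$ (violations of which are measured by $\mathcal{L}_{BC}$) and initial datum $e(0,x,v) = u_0(x,v) - u_{nn}(0,x,v)$ whose relevant Sobolev norm is controlled by $\mathcal{L}_{IC}$. For the $L^2$ statement I would multiply the error equation by $e$ and integrate over $[0,1]\times\mathbb{R}$. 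The transport contribution $\int v\, e\, e_x\, dx\, dv$ collapses by periodicity in $x$, up to a residual quadratic trace absorbed into $\mathcal{L}_{BC}$. Integration by parts in $v$ rewrites $-\beta\int (ve)_v\, e\, dv = -\frac{\beta}{2}\int e^2\, dv$ and $-q\int e_{vv}\, e\, dv = q\int e_v^2\, dv \geq 0$, provided that $v e^2$ and $e\, e_v$ vanish as $|v|\to\infty$; this is the role of the assumptions \eqref{deriv 1} and \eqref{deriv 2}. Cauchy--Schwarz on the source term then yields
\begin{equation*}
\frac{d}{dt}\|e(t)\|_{L^2}^2 \leq (1+\beta)\|e(t)\|_{L^2}^2 + \|r(t)\|_{L^2}^2 + C\,\mathcal{L}_{BC},
\end{equation*}
and Gronwall's inequality gives $\sup_{t\in[0,T]}\|e(t)\|_{L^2([0,1]\times\mathbb{R})}^2 \leq C(T)\,\mathcal{L}_{TOTAL}^{(0;\textit{FP})}$. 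Restricting the $v$-integration on the left to $[-V,V]$ only decreases the norm, which proves the first convergence.

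For the $H^1(\Omega; L^2([-V,V]))$ statement I would differentiate the error equation once in $x$ and set $w := e_x$. Since all coefficients of the Fokker--Planck operator are $x$-independent and the boundary condition is periodic, $w$ satisfies a same-structure equation with source $-r_x$, periodic boundary, and initial datum $w(0)$ whose $L^2$ norm is controlled by $\mathcal{L}_{IC}(u_{nn};1,2,1,2)$; similarly, $\int_0^T \|r_x\|_{L^2}^2\, dt$ is controlled by $\mathcal{L}_{GE}(u_{nn};0,2,1,2,1,2)$. Re-running the previous energy estimate with $w$ in place of $e$ produces an analogous Gronwall bound on $\sup_t \|e_x(t)\|_{L^2}^2$. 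Summing the two bounds gives the desired $H^1_x L^2_v$ convergence. The extra $v$-regularity that appears in the loss function ensures that the decay hypotheses on the once-differentiated error remain available, so that the integrations by parts in $v$ on the equation for $w$ are still legitimate.

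The main obstacle is unquestionably the unboundedness of the velocity variable $v \in \mathbb{R}$. The drift and friction terms produce, after integration by parts, boundary contributions at $v=\pm\infty$ which a priori need not vanish, and even the seemingly innocuous step that rewrites $-\frac{\beta}{2}\int v\,\partial_v(e^2)\,dv$ as $\frac{\beta}{2}\int e^2\, dv$ depends on sufficient pointwise decay of $e$. The assumptions \eqref{deriv 1} and \eqref{deriv 2} must therefore be strong enough to kill these boundary terms for both $u$ and $u_{nn}$, and for $e_x$ as well in the $H^1$ estimate. Once this decay is secured, the remainder of the argument is a direct transfer of the strategy used in Theorem \ref{theorem_HB}, running Gronwall's inequality on each successive derivative with the loss functional playing the combined role of source and initial data.
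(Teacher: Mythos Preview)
Your overall strategy—energy identity on the error, integration by parts, Gr\"onwall, then commute with $\partial_x$ for the $H^1$ statement—is exactly what the paper does. The gap is in your treatment of the velocity variable, and it stems from a misreading of the hypotheses \eqref{deriv 1}--\eqref{deriv 2}.

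Those assumptions are \emph{not} decay conditions at $|v|\to\infty$. They assert smallness of the error and boundedness of $u,u_{nn}$ at the \emph{truncation boundary} $v=\pm V$ (the paper takes $V=5$). The neural network $u_{nn}$ is only trained on $[0,1]\times[-V,V]$ and has no reason whatsoever to decay as $|v|\to\infty$; in particular your requirements ``$v e^2\to 0$'' and ``$e\,e_v\to 0$'' at infinity are unverifiable, so the integrations by parts over $\mathbb{R}$ do not go through. Consequently the step ``restricting the $v$-integration on the left to $[-V,V]$ only decreases the norm'' is unreachable, because the full-line estimate never gets established.

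The fix is to run the energy estimate directly on $[0,1]\times[-V,V]$. Then the $v$-integration by parts in the diffusion term produces boundary contributions $q\,[e_v e]_{v=-V}^{v=V}$, and \emph{these} are what \eqref{deriv 1}--\eqref{deriv 2} control (one factor is $O(\epsilon)$, the other is $O(C)$). For the friction term the paper does not integrate by parts at all: since $|v|\le V$ on the truncated domain, one simply bounds $\big|\beta\!\int v\,e_v\,e\big|\le V\beta\,\|e_v\|_2\|e\|_2$ and absorbs $\varepsilon\|e_v\|_2^2$ into the good diffusion term $q\|e_v\|_2^2$. Your integration-by-parts route for the friction term would also work on $[-V,V]$, but it generates an extra boundary term $\tfrac{\beta}{2}[v e^2]_{-V}^{V}$ that you would again have to bound via \eqref{deriv 1}. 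With this correction in place, your $L^2$ and $\partial_x$ arguments coincide with the paper's Theorem~\ref{FP_proof_1} and Corollary~\ref{FPcor1}.
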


\begin{proof}
     Proofs are provided in Theorem \ref{FP_proof_1} and Theorem \ref{FP_proof_2}
\end{proof}

\begin{remark}
The theorems in this section imply that the proposed loss functions guarantee the convergence of neural networks in the corresponding Sobolev spaces, thereby coinciding with the main idea of \textit{Sobolev Training}. However, the theoretical results in this section imply the convergence of $u_{nn}$ to $u$ only when $\mathcal{L}_{TOTAL} \to 0$. In section \ref{sec5}, we empirically demonstrate faster convergence of the error of the proposed Sobolev-PINNs. 
\end{remark}

\begin{remark}
The theorems in this section cannot be directly generalized to the high-dimensional cases because even the 2-dimensional case starts involving the convexity of the boundary. Though it has also been shown that the Fokker-Planck operator has strong hypoellipticity and the solutions to the boundary problems are smooth even in the higher dimensional case, the proof requires long rigorous mathematical analysis. For more information, see \textup{\cite{MR3788197, MR3897919}}.
\end{remark}

\begin{remark}
Because we cannot access the label (which corresponds to the analytic solution) on the interior grid, solving PDEs using a neural network is not a fully supervised problem. Interestingly, by incorporating derivative information in the loss function, the proposed approach enables \textit{Sobolev Training} even if neither the labels nor the derivatives of the target function are provided.
\end{remark}

\section{Experimental Results}\label{sec5}
In this section, we provide experimental results for toy examples that comprise several regression problems and various kinds of differential equations, including the heat equation, Burgers' equation, the kinetic Fokker--Planck equation, and high-dimensional Poisson’s equation. We employ a fully connected neural network, which is a natural choice for function approximation. We use the hyperbolic tangent function as a nonlinear activation function. Although $ReLU(x)=\max(0,x)$ is a frequent choice in modern machine learning, it is not appropriate for solving PDEs because the second derivatives of the neural network vanish.

In appreciation of Automatic Differentiation, we can easily compute derivatives of any order of a neural network with respect to input data despite the compositional structure; see \cite{baydin2017automatic} and references therein. We implemented our neural network using PyTorch, a widely used deep learning library \cite{paszke2019pytorch}. For the numerical experiments, we used a neural network with three hidden layers each of which had $d$-256-256-256-1 neurons, where $d$ denotes the input dimension. We used the ADAM optimizer \cite{kingma2014adam}, a popular gradient-based optimizer. 

To see whether our loss functions performed more efficiently than the traditional $L^2$ loss function introduced in Remark \ref{rmk1}, we made everything maintain the same except the loss function. We compared the loss functions on the basis of $\lVert u - u_{nn}\rVert_{L^2(\Omega)} / \lVert u \rVert_{L^2(\Omega)}$ test error for the toy examples and the high-dimensional Poisson equation, and $\lVert u-u_{nn} \rVert_{L^{\infty}(0, T; L^2(\Omega))}$ test error for the heat, Burgers, Fokker--Planck equation as in the left hand side of the estimates in \ref{theorem_HB}, \ref{theorem_FP}. For each loss function, we recorded the number of epochs required to meet a certain error threshold and the test error. Considering the randomness due to network initialization, we repeated the training a hundred times. Conversely, we initialized a hundred different neural networks with uniform initialization and trained them in the same manner. To compute the test error, we used analytic solutions for the Heat equation, Burgers' equation, and the high-dimensional Poisson equation, and a numerical solution from \cite{wollman2008deterministic} for the kinetic Fokker--Planck equation. 

In Figure \ref{fig:regression}-\ref{fig:Fokker--Planck}, histograms in the first column are generated using the number of epochs required to achieve a certain accuracy, the plots in the second column are the error plots for a hundred training instances, and the third column shows the actual computation time for training in seconds. We provide the average errors over a hundred instances in the error plot with the logarithmic axis scale. Evidently, our loss function significantly reduces the number of epochs and test errors when solving various kinds of PDEs using neural networks. 

\subsection{Toy examples}
First, we consider two simple regression problems with target functions $\sin(x)$ and $ReLU(x)$, respectively. For these toy examples, we define the loss functions as follows:
\begin{align}
    \text{L2 loss} &= \|u_{nn}(x) - y(x)\|_2^2, \nonumber \\
    \text{H1 loss} &= \|u_{nn}(x) - y(x)\|_2^2 + \|u_{nn}'(x) - y'(x)\|_2^2, \nonumber \\
    \text{H2 loss} &= \|u_{nn}(x) - y(x)\|_2^2 + \|u_{nn}'(x) - y'(x)\|_2^2 + \|u_{nn}''(x) - y''(x)\|_2^2, \nonumber
\end{align}
where $y(x)$ denotes either $\sin(x)$, or $ReLU(x)$. We uniformly sampled a hundred grid points from $[0, 2\pi]$ for training $\sin(x)$. Similarly, we uniformly sampled a hundred grid points from $[-1,1]$ for training $ReLU(x)$. We expected the training to become fast using higher order derivatives as many as possible when training $\sin(x)$ and $ReLU(x)$. Figure \ref{fig:regression} confirms our assumption to be true. Interestingly, although $ReLU(x)$ is not twice weakly differentiable at only one point $x=0$, the H2 loss does not facilitate the training. 

\begin{figure}[ht]
    \centering
    \includegraphics[height=0.4\textwidth, width=\textwidth, draft=False]{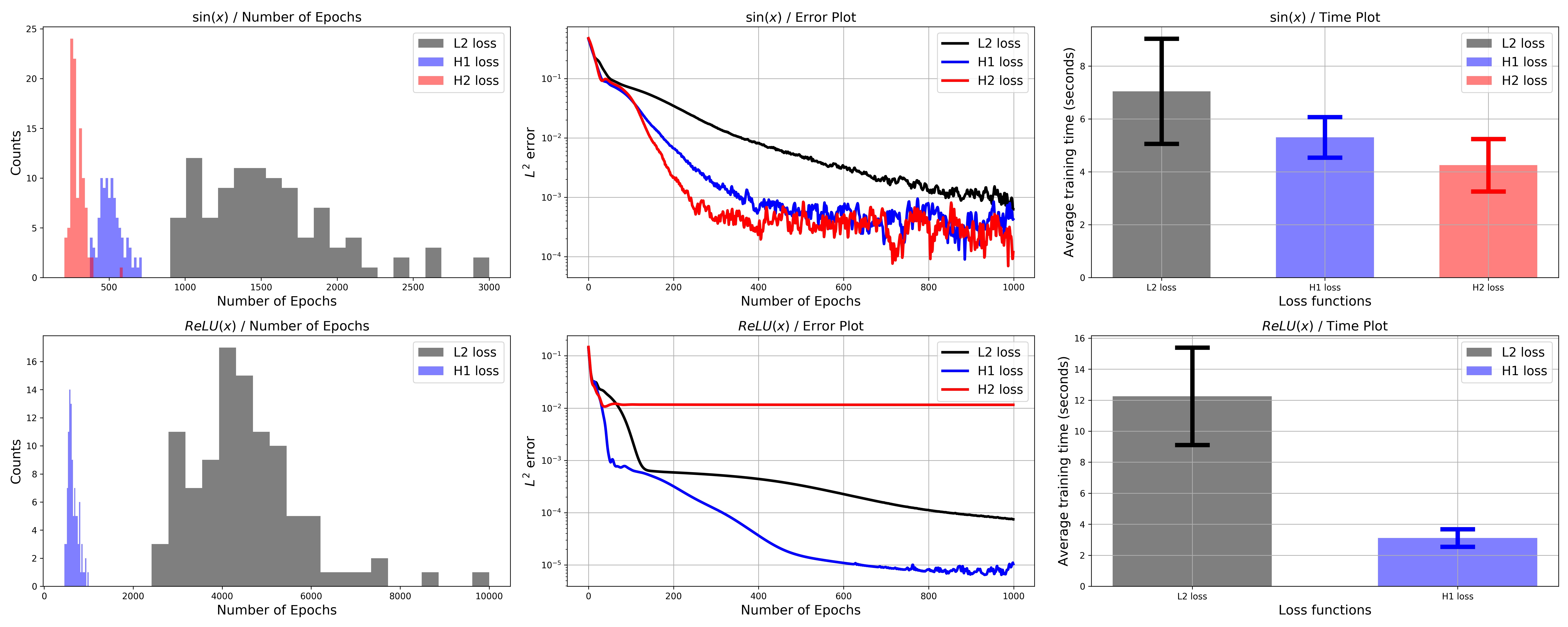}
    \caption{First row: results for $\sin(x)$, Second row: results for $ReLU(x)$. First column: Histograms generated from the repeated training of neural networks for training $\sin(x)$, and $ReLU(x)$. Second column: Test $L^2$ errors. Third column: Average training time for each loss function to achieve certain error threshold. Error bars are for standard deviations. The thresholds for the error are set to $10^{-4}$.}
    \label{fig:regression}
\end{figure}

In order to explore the nature of \textit{Sobolev Training}, we design more complicated toy examples. Consider the target functions $\sin(kx)$, for $k=1,2,...,5$, and $ReLU(kx) = \max(0,kx)$, for $k=1,2,3...,10$. As $k$ increases, the target functions and their derivatives contain drastic changes in their values, so it is difficult to learn those functions. We hypothesize that in \textit{Sobolev Training}, the training becomes faster since we give explicit label for the derivatives and it becomes easier to capture the drastic changes in the derivatives. This is empirically shown to be true in Figure \ref{fig:diff_k}. We train neural networks to approximate $\sin(kx)$, and $ReLU(kx)$ for different $k$ and record the number of training epochs to achieve certain error threshold which can be regarded as a difficulty of the problem. As one can see in Figure \ref{fig:diff_k}, the difficulty changes little to no when we train with H1 and H2 losses while the difficulty increases with $k$ when L2 loss is used. This implies that the difficulty of training barely changes in \textit{Sobolev Training} even the target function has stiff changes. The same observations are made when solving PDEs. The improvement of our loss functions compare to L2 loss function are more dramatic for Burgers' equation (which has stiff solution \cite{raissi2019physics}) than for the heat equation, with the initial condition of $f_2$ (which has a higher frequency) than with the initial condition of $f_1$ initial condition in the Fokker--Planck equation, and as $k$ increases for the high-dimensional Poisson equation, see Figure \ref{fig:poisson_diff_k}.

\begin{figure}[ht]
    \centering
    \includegraphics[height=0.2\textwidth, width=0.666\textwidth, draft=False]{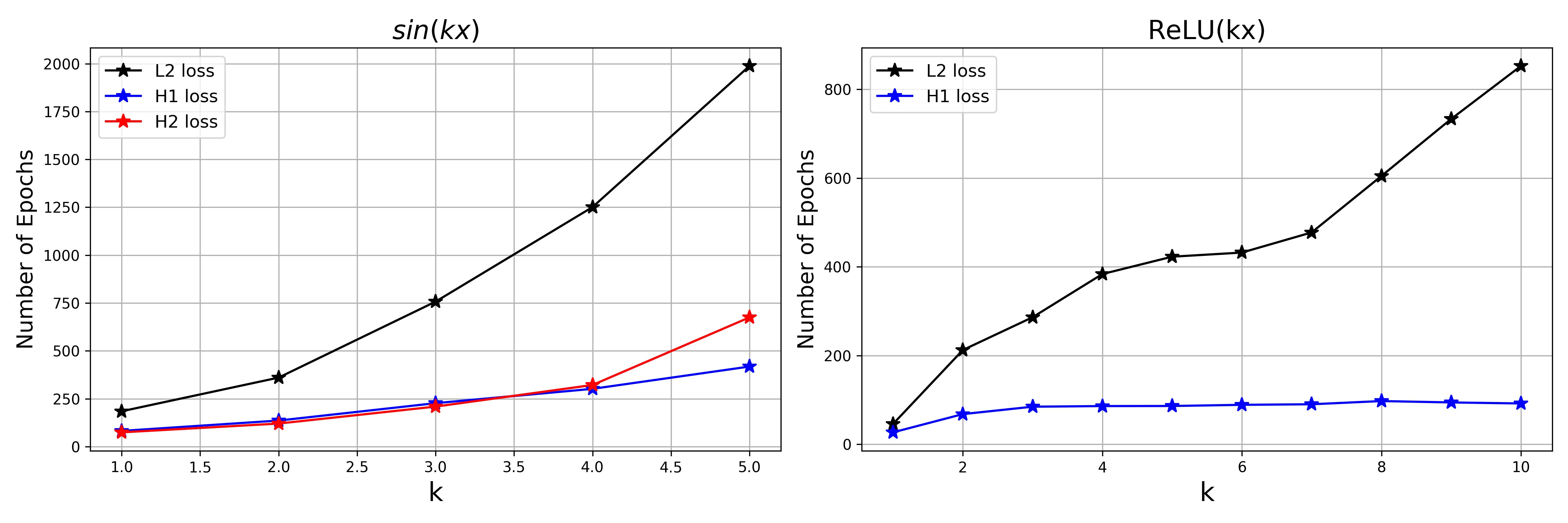}
    \caption{Average number of epochs to make error less than $10^{-3}$ increases in L2 loss as k increases. However, when we use H1, and H2 losses, required number of epochs increases much more slowly or stays the same as k increases.}
    \label{fig:diff_k}
\end{figure}

\subsection{The heat equation \& Burgers' equation}
We now demonstrate the results of the \textit{Sobolev Training} of the neural networks for solving PDEs. We begin with the 1-D heat equation, and Burgers' equation, which is the simplest PDE that combines both the nonlinear propagation effect and diffusive effect. Burgers' equation often appears as a simplification of a more complex and sophisticated model, such as the Navier--Stokes equation. The equations with the homogeneous Dirichlet boundary condition read as follows:

\begin{table}[ht]
\renewcommand{\arraystretch}{1.3}
\begin{center}
\begin{tabular}{c|c} 
        \hline
        The Heat equation & Burgers' equation  \\ \hline
        {$\!\begin{aligned} 
               u_t - u_{xx} &= 0 \text{ in } (0, 10] \times [0, \pi], \nonumber \\
                u(0,x) &= \sin(x) \text{ on } [0, \pi], \nonumber \\
                u(t,x) &= 0 \text{ on } [0, 10] \times \{0, \pi\}. \nonumber \end{aligned}$} & 
        {$\!\begin{aligned} 
               u_t + uu_{x} - 0.2u_{xx} &= 0 \text{ in } (0, 0.01] \times [0,1], \nonumber \\
                u(0,x) &= -\sin(\pi x) \text{ on } [0,1], \nonumber \\
                u(t,x) &= 0 \text{ on } [0, 0.01] \times \{0,1\}. \nonumber \end{aligned}$} \\ \hline
\end{tabular}
\end{center}
\end{table}

The heat equation attains a unique analytic solution $u(t,x) = \sin(x)\exp(-t)$; an analytic solution of Burgers' equation is provided in \cite{basdevant1986spectral}.

Although \cite{sirignano2018dgm} indicated that iterative random sampling reduces the computational cost, we fixed the grid points before training because we aimed to compare the efficiency of our loss function with that of the traditional one. For the heat equation and the Burgers' equation, we uniformly sampled the grid points $\{t_i, x_j\}_{i, j = 1}^{N_t, N_x}$ from $(0,T] \times \Omega$, where $N_t$ and $N_x$ denote the number of samples for interior $t$ and $x$, respectively. For the initial and boundary conditions, we sampled the grid points from $\{t=0, x_j\}_{j=1}^{N_x} \in \{0\} \times \Omega$ and $\{t_i, x_j\}_{i,j=1}^{N_t,N_B} \in [0,T] \times \partial\Omega$, respectively, where $N_B$ denotes the number of grid points in $\partial \Omega$. Here, we set $N_t, N_x, N_B = 31$. The testing data were also uniformly sampled from the domain of the PDEs.

The L2, H1, and H2 losses are the Monte-Carlo approximations of \eqref{loss_hb_0}, \eqref{loss_hb_1}, and \eqref{loss_hb_2}, respectively, for the heat equation and Burgers' equation. Working on achieving a smooth solution, we observed that the H2 loss performed the best, followed by the H1 loss and then the L2 loss in both accuracy, and computation time. We show the corresponding results in Figure \ref{fig:heat_burgers}.

\begin{figure}[ht]
    \centering
    \includegraphics[height=0.4\textwidth, width=\textwidth, draft=False]{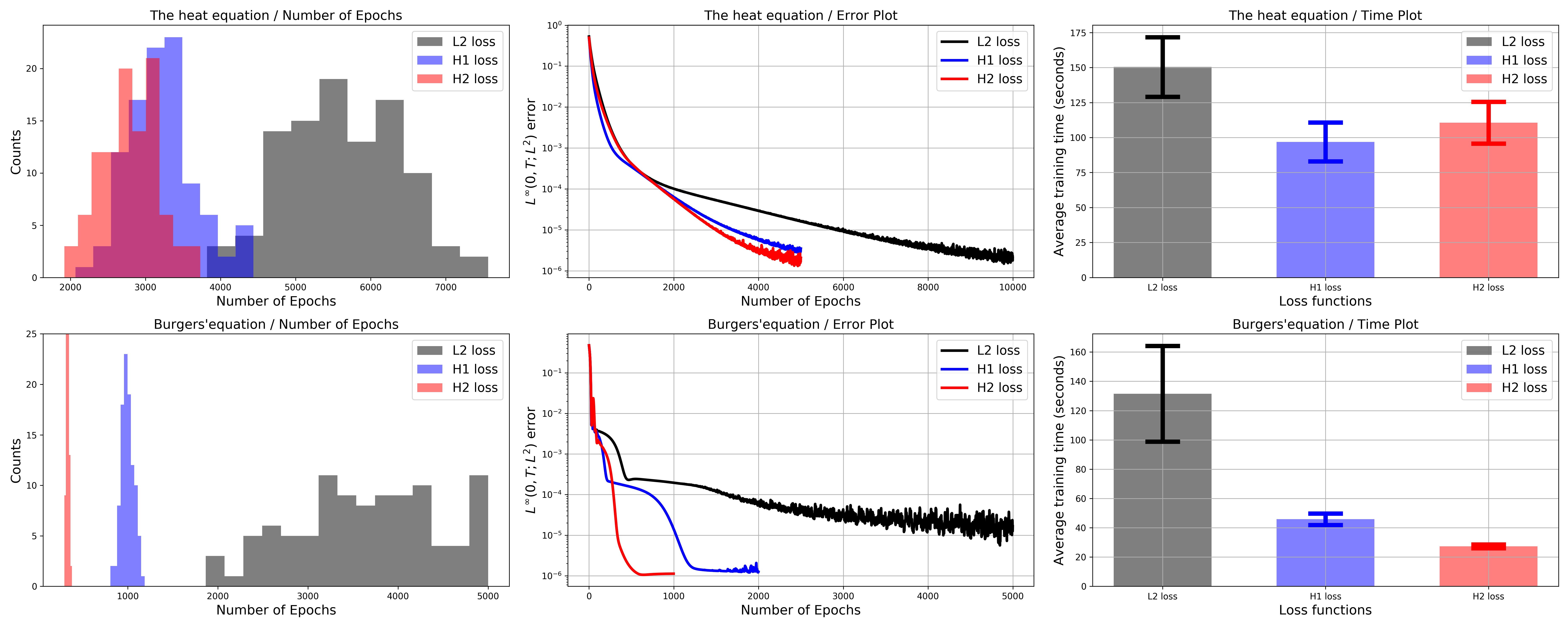}
    \caption{First row: results for the heat equation. Second row: results for Burgers' equation. First column: Histograms for the heat and Burgers' equation generated from a hundred neural networks for each loss function. Second column: Test $L^{\infty}(0,T;L^2(\Omega))$ errors. Third column: Average training time for each loss function to achieve certain error threshold. Error bars are for standard deviations. The thresholds for the error are set to $10^{-5}$.}
    \label{fig:heat_burgers}
\end{figure}

%%%%%%%%%%%%%%%%%%%%%%%%%%%%%%%%%%%%%%%%%%%%%%%%%%%%%%%%%%%%%%%%%
\iffalse
\begin{table}[ht]
\caption{Average of a hundred test errors for the heat equation and Burgers' equation for each loss function.}
\renewcommand{\arraystretch}{1.3}
\begin{center}
\begin{tabular}{c|ccc} \label{table:heat_burgers}
                       & L2 loss                & H1 loss                & H2 loss                \\ \hline
Heat / 3000 Epochs     & $4.119 \times 10^{-5}$ & $2.689 \times 10^{-5}$ & $1.942 \times 10^{-5}$ \\ \hline
Burgers' / 1000 Epochs & $2.345 \times 10^{-5}$ & $1.256 \times 10^{-6}$ & $1.122\times 10^{-6}$ 
\end{tabular}
\end{center}
\end{table}
\fi
%%%%%%%%%%%%%%%%%%%%%%%%%%%%%%%%%%%%%%%%%%%%%%%%%%%%%%%%%%%%%%%%%

\subsection{The Fokker--Planck equation} The kinetic Fokker--Planck equation describes the dynamics of a particle whose behavior is similar to that of the Brownian particle. The Fokker--Planck operator has a strong regularizing effect not just in the velocity variable but also in the temporal and the spatial variables by the hypoellipticity. The Fokker--Planck equation has been considered in numerous physical circumstances including the Brownian motion described by the Uhlenbeck-Ornstein processes. 

We provide two simulation results for different initial conditions for the 1-D Fokker--Planck equation with the periodic boundary condition. For the Fokker--Planck equation, we adopted the idea of sampling from \cite{hwang2020trend}. Because it is practically difficult to consider the entire space for the $v \in \mathbb{R}$ variable, we truncated the space for $v$ as $[-5, 5]$. We then uniformly sampled the grid points $\{t_i, x_j, v_k\}_{i,j,k = 1}^{N_t, N_x, N_v}$ from $(0,T] \times \Omega \times [-5, 5]$, where $N_v$ denotes the number of samples for $v$. The grid points for the initial and periodic boundary conditions were accordingly sampled. The truncated equation reads as follows:
\begin{align}
u_t + vu_x - \beta (v u)_v  - q u_{vv} &= 0,\text{  for }(t,x,v)\in (0, 3] \times [0, 1] \times [-5,5], \nonumber\\
u(0,x,v) &= f(x,v), \text{  for }(x,v)\in  [0, 1] \times [-5,5], \nonumber\\
\partial^\alpha_{t,x,v}u(t,1,v) - \partial^\alpha_{t,x,v}u(t, 0, v) &= 0,\text{  for }(t,v)\in [0,3] \times [-5,5], \nonumber
\end{align}
where $f(x,v)$ is either 
\begin{align}
f_1(x,v) &= \frac{\exp(-v^2)}{\int_{-5}^{5} \exp(-v^2) dv}, \text{ or} \nonumber\\ 
f_2(x,v) &= \frac{(1+\cos(2\pi x))\exp(-v^2)}{\int_{0}^{1}\int_{-5}^{5} (1+\cos(2\pi x))\exp(-v^2) dvdx},\nonumber
\end{align} and $\beta=0.1, q=0.1$. 

A numerical solution on the test data was computed by a method shown by \cite{wollman2008deterministic} and used for computing the test error. L2 loss and H1 loss denote the Monte-Carlo approximations of \eqref{loss_fp_0} and \eqref{loss_fp_1}, respectively. The values of $N_t, N_x, \text{ and } N_v$ were set to be 31, and the grid points were uniformly sampled. Expectedly, a solution of the Fokker--Planck equation could be estimated substantially faster using our loss function in both cases. We have provided the detailed results in Figure \ref{fig:Fokker--Planck}.%, and Table \ref{table:Fokker-Planck}.

\begin{figure}[ht]
    \centering
    \includegraphics[height=0.4\textwidth,width=\textwidth,draft=False]{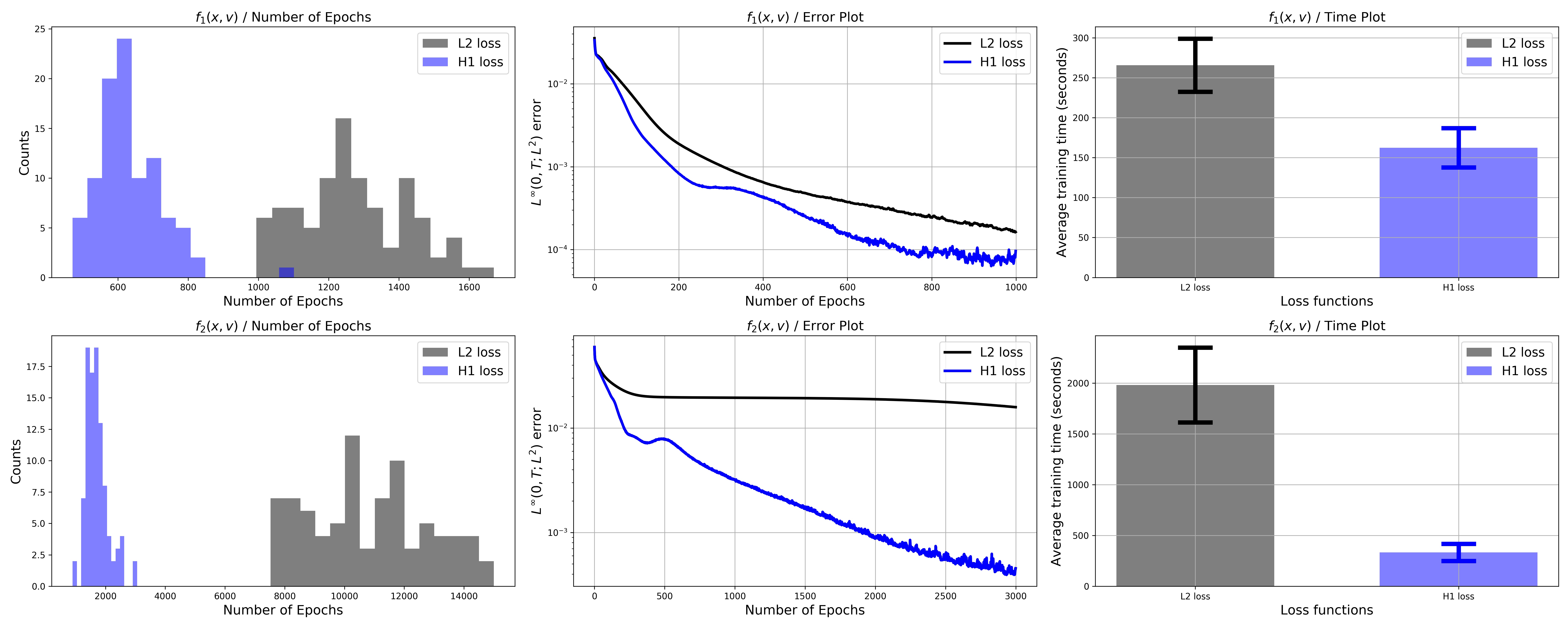}
    \caption{First row: results for $f_1$ initial condition. Second row: results for $f_2$ initial condition. First column: Histograms generated from a hundred neural networks for each loss function. Second column: Test $L^{\infty}(0,T;L^2(\Omega))$ errors. Third column: Average training time for each loss function to achieve certain error threshold. Error bars are for standard deviations. The thresholds for the errors for the initial conditions $f_1(x,v) \text{, and } f_2(x,v)$ are set to $10^{-4}$, and $10^{-3}$, respectively.}
    \label{fig:Fokker--Planck}
\end{figure}

%%%%%%%%%%%%%%%%%%%%%%%%%%%%%%%%%%%%%%%%%%%%%%%%%%%%%%%%%%%%%%%%%
\iffalse
\begin{table}[ht]
\caption{Average of a hundred test errors for the Fokker--Planck equation with different initial condition for each loss function.}
\begin{center}
\renewcommand{\arraystretch}{1.3}
\begin{tabular}{c|cc} \label{table:Fokker--Planck}
                       & L2 loss                & H1 loss                  \\ \hline
Fokker--Planck w/ $f_1(x,v)$ / 2000 Epochs     & $6.486 \times 10^{-5}$ & $5.700 \times 10^{-5}$ \\ \hline
Fokker--Planck w/ $f_2(x,v)$ / 2000 Epochs & $9.900 \times 10^{-3}$ & $8.510 \times 10^{-4}$ 
\end{tabular}
\end{center}
\end{table}
\fi
%%%%%%%%%%%%%%%%%%%%%%%%%%%%%%%%%%%%%%%%%%%%%%%%%%%%%%%%%%%%%%%%%

\subsection{The High-dimensional Poisson equation}
The Poisson equation serves as an example problem in the recent literature; see \cite{weinan2018deep, hsieh2019learning, zang2020weak}. In this section, we provide empirical results to demonstrate that the proposed loss functions perform satisfactorily when equipped with iterative sampling for solving high-dimensional PDEs; see \cite{sirignano2018dgm} for more information. Convergence result similar to those of in section \ref{sec4} for the Poisson equation is given in section \ref{appendix_poisson}. We consider the following high-dimensional Poisson equation with the Dirichlet boundary condition:
\begin{align}
    -\Laplace u &= \frac{\pi^2}{4} \sum_{i=1}^d \sin(\frac{\pi}{2} x_i), \text{ for } x \in \Omega = (0,1)^d, \nonumber \\
    u &= \sum_{i=1}^d \sin(\frac{\pi}{2} x_i), \text{ for } x \in \partial \Omega, \nonumber
\end{align}
where $x = (x_1, x_2, ..., x_d) \in \Omega$. One can readily prove that $u(x) = \sum_{i=1}^d \sin(\frac{\pi}{2} x_i)$ is a strong solution. We compare the following three loss functions with each other: 
\begin{align}
    \mathcal{L}_{TOTAL}^{(0;Poisson)}(u_{nn}) &= \mathcal{L}_{GE}(u_{nn}; 0,2)+ \mathcal{L}_{BC}(u_{nn};0,2),\label{loss_PO_0}\\ 
    \mathcal{L}_{TOTAL}^{(1;Poisson)}(u_{nn}) &= \mathcal{L}_{GE}(u_{nn}; 1,2)+ \mathcal{L}_{BC}(u_{nn};0,2),\label{loss_PO_1}\\ 
    \mathcal{L}_{TOTAL}^{(2;Poisson)}(u_{nn}) &= \mathcal{L}_{GE}(u_{nn}; 1,2)+ \mathcal{L}_{BC}(u_{nn};1,2).\label{loss_PO_2} 
\end{align} Notably, the aforementioned loss functions have the variable $x$ only. Table \ref{poisson_error} presents the relative errors on a predefined test set for $d = 10, 50, \text{ and } 100$. Evidently, in all cases, the proposed loss functions outperform the traditional $L^2$ loss function. 

\begin{table}[ht]
\caption{Average of the relative errors of a hundred neural networks for the high-dimensional Poisson’s equations. We uniformly sampled 500 data points from $\Omega$ for each epoch and trained the neural networks in 10000 epochs at a learning rate $10^{-4}$.}
\begin{center}
\begin{tabular}{cccc}
\hline 
\rule{0pt}{13pt} Dimension                              &$\mathcal{L}_{TOTAL}^{(0;Poisson)}$ & $\mathcal{L}_{TOTAL}^{(1;Poisson)}$ &$\mathcal{L}_{TOTAL}^{(2;Poisson)}$  \\[0.2em] \hline
10 & 0.38\%  & \textbf{0.22}\% & \textbf{0.22}\% \\ 
50 & 2.00\% & 1.74\% & \textbf{1.52}\%  \\ 
100 & 3.15\% & 3.06\% &\textbf{ 2.89}\% \\ \hline 
\end{tabular}
\end{center}
\label{poisson_error}
\end{table}

We next consider the high-dimensional Poisson equation with different boundary condition. In subsection 5.1, we pointed out that the "difficulty" of learning $\sin(kx)$ increases as $k$ increases. As a generalization of the argument, we consider the following PDEs:
\begin{align}
    -\Laplace u &= \frac{(k\pi)^2}{4} \sum_{i=1}^d \sin(\frac{k\pi}{2} x_i), \text{ for } x \in \Omega = (0,1)^d, \nonumber \\
    u &= \sum_{i=1}^d \sin(\frac{k\pi}{2} x_i), \text{ for } x \in \partial \Omega, \nonumber
\end{align} for d=10, k = 1,3, and 5. As one can see in Figure \ref{fig:poisson_diff_k}, the improvement of Sobolev training gets bigger as $k$ increases. This observation coincides with the one in section \ref{sec5}, as we expected.  Moreover, we present the comparison of training time to meet a certain error value for different loss functions in Figure \ref{fig:poisson_diff_k}. The result shows that it is advantageous to use the proposed loss functions in time, even in high-dimensional case. 

\begin{figure}[t]
    \centering
    \includegraphics[height=0.7\textwidth,width=\textwidth,draft=False]{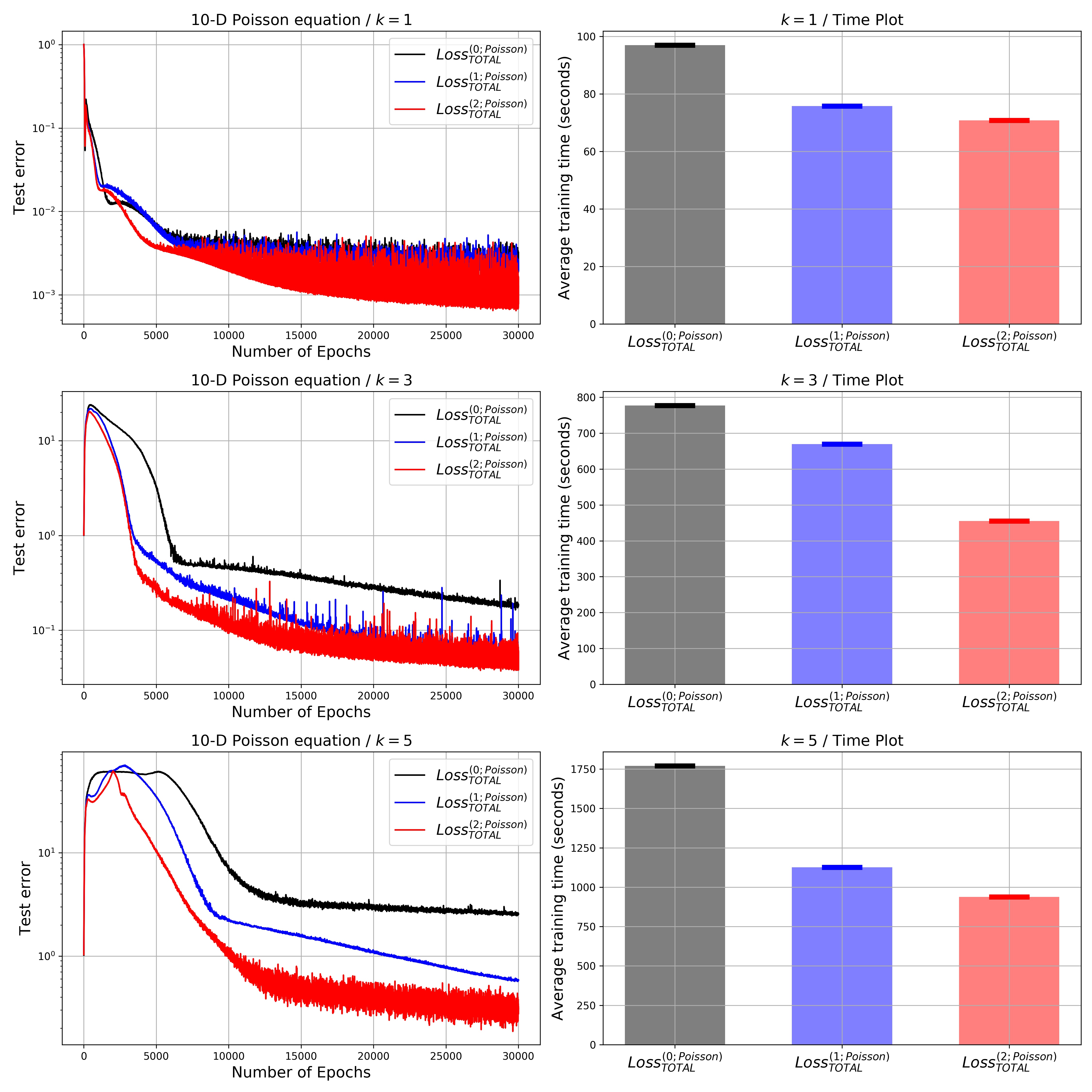}
    \caption{Left column: Test errors as training goes for different values of $k$. Right column: Required Training Time to achieve a certain test error. 
    }
    \label{fig:poisson_diff_k}
\end{figure}

\subsection{Dependency on learning rates}
In this subsection, we provide several experiments that show the proposed loss functions generally perform better in different learning rates. We first show the results for Burgers' equation. In Figure \ref{fig:burgers_lr}, we show the test errors versus training epochs plot for different learning rates. We used $10^{-3}, 10^{-4}, 10^{-5}$ as learning rates and we observe that H2 loss performs best followed by H1 and L2 loss functions. 

\begin{figure}[ht]
    \centering
    \includegraphics[height=0.2\textwidth,width=\textwidth,draft=False]{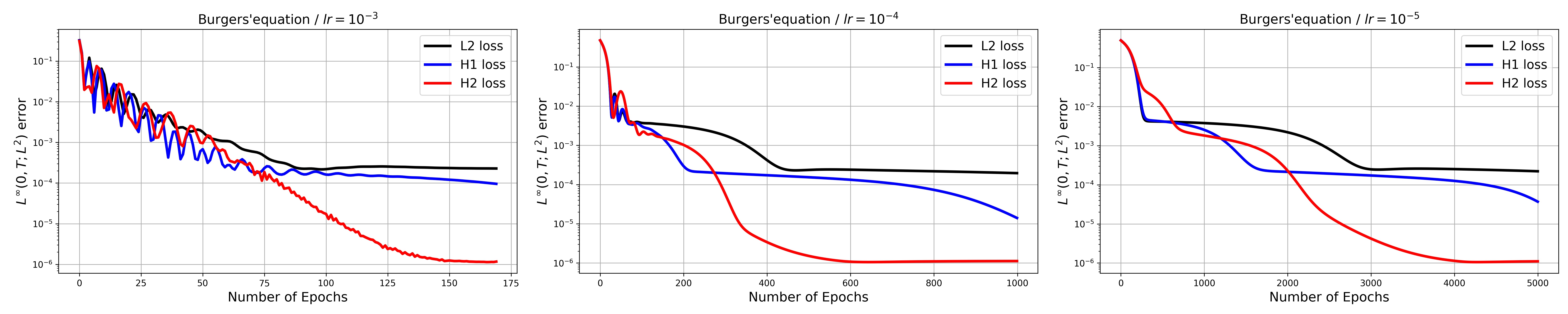}
    \caption{Test errors as training goes for different learning rates. 
    }
    \label{fig:burgers_lr}
\end{figure}

We next present the similar experiments for the high-dimensional Poisson equation. We trained 30 neural networks with different initializations with different learning rates. The average errors are presented in Figure \ref{fig:poisson_lr}. As the same in the Burgers' equation, our loss functions performs better than the traditional one in all learning rates. 

\begin{figure}[ht]
    \centering
    \includegraphics[height=0.2\textwidth,width=\textwidth,draft=False]{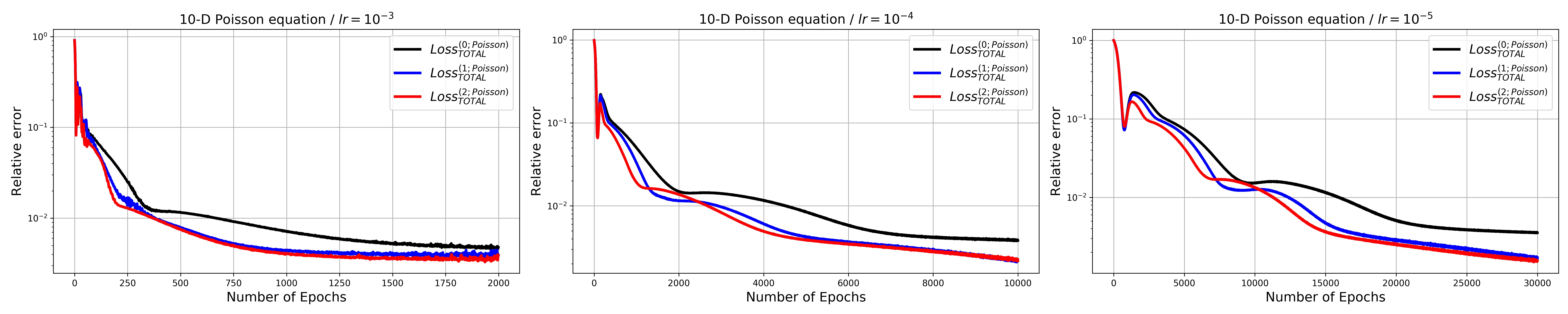}
    \caption{Test errors as training goes for different learning rates. 
    }
    \label{fig:poisson_lr}
\end{figure}

\section{Discussion and Conclusion}\label{sec6}
Inspired by \textit{Sobolev Training}, we proposed Sobolev-PINNs, a novel framework involving new loss functions, which efficiently guided the training of neural networks for solving PDEs. We theoretically justified that the proposed loss functions guaranteed the convergence of a neural network to a solution of PDEs in the corresponding Sobolev spaces. We also discussed that the proposed theorems imply that the training becomes \textit{Sobolev Training} by slightly modifying the loss function, although the process of estimating neural network solutions of PDEs is not fully supervised.

In addition to the toy examples, which showed the exceptional speed of \textit{Sobolev Training}, we provided empirical evidences demonstrate that Sobolev-PINNs expedited the training more than the traditional $L^2$ loss function. We believe that this can solve the problem associated with the high costs involved in estimating the neural network solutions of PDEs. Moreover, our experiments on high-dimensional problems showed that the proposed loss function performed better when equipped with iterative grid sampling. The histograms in Figure \ref{fig:regression}-\ref{fig:Fokker--Planck} indicate that our loss function provided more stable training in that it reduced the variance in the distribution of the number of epochs (e.g., for the Burgers' equation, L2 loss: 3651$\pm$812, H1 loss: 995$\pm$71, and H2 loss: 331$\pm$15). Thus, the training, when governed by our loss function, became robust to the random initialization of the weights.

%\appendix

\section{Proofs for the Theorems in Section \ref{sec4}}\label{sec7}

\subsection{The heat equation}
We denote the strong solution of the heat equation 
\begin{align}
    u_t - u_{xx} &= 0 \text{ in } (0, T] \times \Omega, \nonumber \\
    u(0,x) &= u_0(x) \text{ on } \Omega, \nonumber \\
    u(t,x) &= 0 \text{ on } [0, T] \times \partial\Omega, \nonumber
\end{align}
by $u$ and the neural network solution by $u_{nn}$. Then, $v = u - u_{nn}$ satisfies:
\begin{align}\label{heat_v}
    v_t - v_{xx} &= f(t,x) \text{ in } (0, T] \times \Omega, \nonumber \\
    v(0,x) &= g(x) \text{ on } \Omega, \\
    v(t,x) &= 0 \text{ on } [0, T] \times \partial\Omega, \nonumber
\end{align}
for some $f, \text{ and }g$. Here, we can set the boundary to be zero by multiplying $B(x)$, where $B(x)$ is a smooth function satisfying $B(x) \begin{cases}
=0,  &x \in \partial\Omega \\
\neq 0,  &x \in \Omega
\end{cases}.$
Then the following holds:
\begin{theorem}[Theorem 7.1.5 in \cite{evans10}]\label{heat_proof} 
If $g \in H^2(\Omega), f_t\in L^2(0,T;L^2(\Omega))$, then,
    \begin{align}
        \max_{0\leq t \leq T} \|v(t)\|_{L^2(\Omega)} &\leq
        C_1(\|f\|_{L^2(0,T;L^2(\Omega))} + \|g\|_{L^2(\Omega)}), \label{L2_ineq} \\
        \esssup_{0\leq t \leq T} \|v(t)\|_{H^1_0(\Omega)} &\leq C_2(\|f\|_{L^2(0,T;L^2(\Omega))} + \|g\|_{H^1_0(\Omega)}), \label{H1_ineq} \\
        \esssup_{0\leq t \leq T} \|v(t)\|_{H^2(\Omega)} &\leq C_3(\|f\|_{H^1(0,T;L^2(\Omega))} + \|g\|_{H^2(\Omega)}), \label{H2_ineq} 
    \end{align}
for some $C_1, C_2, C_3$.
\end{theorem}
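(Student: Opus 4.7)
The strategy is to establish the three inequalities by classical energy methods applied to the error equation \eqref{heat_v}, exactly along the lines of Evans, Chapter 7.1. Since the theorem is cited from \textup{\cite{evans10}}, I would not reprove it from scratch but rather explain how the three estimates follow from standard Galerkin/energy arguments, and highlight the one point where a little care is needed because $v = u - u_{nn}$ rather than $v = u$.

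For \eqref{L2_ineq}, the plan is to multiply the PDE \eqref{heat_v} by $v$ and integrate over $\Omega$. Using the homogeneous boundary condition we obtain
\begin{equation*}
\frac{1}{2}\frac{d}{dt}\|v\|_{L^2(\Omega)}^2 + \|v_x\|_{L^2(\Omega)}^2 = \int_\Omega f v \, dx \leq \frac{1}{2}\|f\|_{L^2(\Omega)}^2 + \frac{1}{2}\|v\|_{L^2(\Omega)}^2.
\end{equation*}
Dropping the nonnegative gradient term and applying Gr\"onwall's inequality on $[0,T]$ yields \eqref{L2_ineq} with $C_1$ depending on $T$.

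For \eqref{H1_ineq}, I would multiply by $v_t$ and integrate in space. After integration by parts in the $v_{xx} v_t$ term (using again the homogeneous boundary), this produces
\begin{equation*}
\|v_t\|_{L^2(\Omega)}^2 + \frac{1}{2}\frac{d}{dt}\|v_x\|_{L^2(\Omega)}^2 = \int_\Omega f v_t\, dx \leq \frac{1}{2}\|f\|_{L^2(\Omega)}^2 + \frac{1}{2}\|v_t\|_{L^2(\Omega)}^2,
\end{equation*}
so after absorbing $\tfrac{1}{2}\|v_t\|^2$ to the left and integrating in time one obtains control of $\sup_{0\le t\le T}\|v_x(t)\|_{L^2(\Omega)}^2$ in terms of $\|f\|_{L^2(0,T;L^2)}^2$ and $\|g_x\|_{L^2(\Omega)}^2$, which is \eqref{H1_ineq}.

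For \eqref{H2_ineq}, the standard trick is to differentiate the equation in $t$ and set $w = v_t$. Then $w$ satisfies the same heat equation with source $f_t$ and initial data $w(0,x) = g_{xx}(x) + f(0,x)$ (since $v_t(0) = v_{xx}(0) + f(0) = g_{xx} + f(0)$). Applying \eqref{H1_ineq} (or rather the preceding $L^\infty_t L^2_x$ estimate) to $w$ gives a bound on $\operatorname{ess\,sup}_t \|v_t\|_{L^2(\Omega)}^2$ in terms of $\|f_t\|_{L^2(0,T;L^2)}^2$ and $\|g_{xx}\|_{L^2}^2 + \|f(0)\|_{L^2}^2$, and then reading the PDE as $v_{xx} = v_t - f$ immediately controls $\|v_{xx}\|_{L^2(\Omega)}$, hence $\|v\|_{H^2}$; the term $\|f(0)\|_{L^2}$ is absorbed into $\|f\|_{H^1(0,T;L^2)}$ by the trace embedding $H^1(0,T;L^2) \hookrightarrow C([0,T];L^2)$.

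The only mildly delicate point, which I would flag explicitly, is the role of the cutoff $B(x)$ used to reduce the problem to homogeneous boundary data for $v$. Since $u_{nn}$ does not automatically vanish on $\partial\Omega$, multiplying the difference $u - u_{nn}$ by $B(x)$ shifts mass into the forcing and initial data. The computation that checks that the resulting $f$ and $g$ are controlled by $\mathcal{L}_{GE}$, $\mathcal{L}_{IC}$, and $\mathcal{L}_{BC}$ (which is what feeds Theorem \ref{theorem_HB}) is routine but must be written out; I expect this bookkeeping, rather than the energy estimates themselves, to be the main place where care is needed.
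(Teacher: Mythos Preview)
The paper does not prove this theorem at all: it is stated verbatim as Theorem 7.1.5 from Evans \cite{evans10} and is immediately applied to the error equation \eqref{heat_v} without further argument. Your energy-method sketch (testing with $v$, then $v_t$, then differentiating in $t$) is precisely the argument Evans gives, so your proposal is correct and in fact supplies more detail than the paper itself. The discussion of the cutoff $B(x)$ is thoughtful but extraneous to this particular theorem; the paper handles the boundary reduction in one sentence before stating the result and does not carry out the bookkeeping you anticipate.
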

By applying above theorem to \eqref{heat_v}, we get the results of Theorem \ref{theorem_HB}.
\begin{remark}
The left hand sides in \eqref{L2_ineq} - \eqref{H2_ineq} are the errors of neural networks in corresponding norms, and the right hand sides are the losses \eqref{loss_hb_0} - \eqref{loss_hb_2} for the heat equation, respectively. This implies that the proposed loss functions are the upper bounds of the errors in the Sobolev spaces, and by minimizing them, we can expect the effect of Sobolev Training when solving PDEs with neural networks.
\end{remark}

In the rest of this section, we will show the similar results for Burgers' equation and the Fokker--Planck equation.

\subsection{Burgers' equation}

We consider the strong solution $u$ of the following Burgers equation in a bounded interval $\Omega=[a,b]$,
\begin{align}
\partial_t u+u\partial_x u-\partial_x^2 u&=0 \quad \mathrm{in} \ \Omega, \label{w7}\\
u&=0\quad \mathrm{on} \ \partial\Omega, \label{w8} 
\end{align}
and the corresponding neural network solution $u_{nn}$ satisfying
\begin{align}
\partial_t u_{nn}+u_{nn}\partial_x u_{nn}-\partial_x^2 u_{nn}&=f \quad \mathrm{in} \ \Omega, \label{w9}\\
u_{nn}&=0\quad \mathrm{on} \ \partial\Omega. \label{w10}
\end{align}
with the inital data $u(0,\cdot)$ and $u_{nn}(0,\cdot)$, respectively.

The following proposition ensures the existence of a strong solution to the initial boundary value problem \eqref{w7})--\eqref{w8} (see \cite{MR3522212}). Here, we multiply $B(x)$ to $u_{nn}(t,x)$ in order to meet the boundary condition. We use the notation $\lesssim$ where the relation $A\lesssim B$ stands for
$A\leq CB$, where $C$ denotes a generic constant.
\begin{proposition}[Theorem 1.2 in \cite{MR3522212}]
Let $u_0 \in H^1_0$. Then there exists a time $T^*=T^*(u_0)>0$ such that the problem \eqref{w7}--\eqref{w8} with initial data $u_0$ has a unique solution of $u$ satisfying
\begin{align*}
u&\in L^2(0,T^*;H^2(\Omega))\cap C([0,T^*);H^1_0(\Omega)), \\
u_t&\in L^2(0,T^*;L^2(\Omega)).
\end{align*}
Furthermore, if $T^*<\infty$, then $\Vert u\Vert_{H^1(\Omega)}\rightarrow \infty$ as $t\rightarrow T^*$.
\end{proposition}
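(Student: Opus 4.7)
The plan is to establish local existence and uniqueness via a standard Galerkin approximation together with energy estimates, exploiting the one-dimensional Sobolev embedding $H^1_0(\Omega)\hookrightarrow L^\infty(\Omega)$ to tame the convective nonlinearity. First I would fix an orthogonal basis $\{w_k\}$ of $H^1_0(\Omega)$ consisting of Dirichlet Laplacian eigenfunctions, project the equation onto the span $V_N=\mathrm{span}\{w_1,\dots,w_N\}$, and obtain an approximating solution $u^N$ solving a locally-in-time system of ODEs via the Picard--Lindel\"of theorem.

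Next I would derive two levels of a priori estimates. Testing the Galerkin equation against $u^N$ and integrating by parts kills the convective term (since $\int u\,u_x\,u\,dx=\tfrac13\int\partial_x(u^3)\,dx=0$ by the boundary condition) and yields the basic energy identity $\tfrac12\tfrac{d}{dt}\|u^N\|_{L^2}^2+\|\partial_x u^N\|_{L^2}^2=0$, giving uniform $L^\infty(0,T;L^2)\cap L^2(0,T;H^1_0)$ control. Testing against $-\partial_x^2 u^N$ produces
\begin{equation*}
\tfrac12\tfrac{d}{dt}\|\partial_x u^N\|_{L^2}^2+\|\partial_x^2 u^N\|_{L^2}^2
= \int_\Omega u^N\,\partial_x u^N\,\partial_x^2 u^N\,dx,
\end{equation*}
and here is the main obstacle: controlling the cubic term. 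Using the 1D Sobolev embedding $\|u^N\|_{L^\infty}\lesssim \|u^N\|_{H^1}$ together with Cauchy--Schwarz and Young's inequality,
\begin{equation*}
\Bigl|\int u^N\partial_x u^N\,\partial_x^2 u^N\,dx\Bigr|
\lesssim \|u^N\|_{L^\infty}\|\partial_x u^N\|_{L^2}\|\partial_x^2 u^N\|_{L^2}
\leq \tfrac12\|\partial_x^2 u^N\|_{L^2}^2+C\|\partial_x u^N\|_{L^2}^4,
\end{equation*}
which produces a closed Riccati-type differential inequality $\frac{d}{dt}\|\partial_x u^N\|_{L^2}^2\leq C\|\partial_x u^N\|_{L^2}^4$. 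This guarantees a lifespan $T^*=T^*(\|u_0\|_{H^1_0})>0$ on which $\|u^N(t)\|_{H^1_0}$ stays bounded, and then, by reintegrating, $u^N$ is uniformly bounded in $L^\infty(0,T^*;H^1_0)\cap L^2(0,T^*;H^2)$ and $\partial_t u^N$ is uniformly bounded in $L^2(0,T^*;L^2)$.

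I would then pass to the limit: weak-$*$ and weak convergence of subsequences in the above spaces, combined with Aubin--Lions compactness (using $H^1_0\hookrightarrow\hookrightarrow L^2$) to obtain strong $L^2((0,T^*)\times\Omega)$ convergence, which suffices to identify the limit of the nonlinear term $u^N\partial_x u^N=\tfrac12\partial_x((u^N)^2)$ distributionally. Continuity $u\in C([0,T^*);H^1_0)$ follows from the standard interpolation lemma applied to $u\in L^\infty(0,T^*;H^1_0)$ with $u_t\in L^2(0,T^*;L^2)$ and $u\in L^2(0,T^*;H^2)$. For uniqueness, I would subtract two solutions $u_1,u_2$, test the difference equation against $u_1-u_2$, bound $\int(u_1-u_2)\partial_x u_2(u_1-u_2)\,dx$ via $\|\partial_x u_2\|_{L^2}\|u_1-u_2\|_{L^\infty}\|u_1-u_2\|_{L^2}$ combined with the embedding, and conclude by Gronwall. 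Finally, the blow-up alternative follows from a standard continuation argument: if $\|u(t)\|_{H^1}$ remained bounded as $t\to T^*$, the local existence result could be applied at $t=T^*$ with initial datum $u(T^*)$, contradicting maximality of $T^*$. The genuine difficulty throughout is that the cubic nonlinearity forces the Riccati-type bound and hence only local existence can be concluded in general, not global.
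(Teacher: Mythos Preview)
Your proof sketch is correct and follows the standard route for local well-posedness of viscous Burgers in a bounded interval: Galerkin approximation with Dirichlet eigenfunctions, the two-level energy estimate (with the key Riccati inequality coming from the 1D embedding $H^1_0\hookrightarrow L^\infty$), Aubin--Lions compactness to pass to the limit in the nonlinearity, Lions--Magenes for the continuity statement, and a Gr\"onwall argument for uniqueness and the continuation criterion.

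However, you should be aware that the paper does \emph{not} prove this proposition at all: it is quoted verbatim as Theorem~1.2 of the cited reference \cite{MR3522212} and is used as a black box to guarantee the existence of the strong solution $u$ appearing in Theorem~\ref{burgers_proof}. There is therefore no ``paper's own proof'' to compare against. Your argument is a reasonable self-contained justification of the quoted result, but for the purposes of this paper it is unnecessary.
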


We will show that the following theorem holds.
\begin{theorem}\label{burgers_proof}
Let $u$ and $u_{nn}$ be strong solutions of \eqref{w7}--\eqref{w8} and \eqref{w9}--\eqref{w10} respectively, on the time interval $[0,T]$. For $w:=u_{nn}-u$, following statements are valid. 
\end{theorem}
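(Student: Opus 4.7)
The starting point is to derive the PDE satisfied by $w=u_{nn}-u$. Subtracting \eqref{w7} from \eqref{w9} and using the identity $u_{nn}\partial_x u_{nn} - u\partial_x u = u_{nn}\partial_x w + w\partial_x u$, one gets
\begin{align*}
w_t + u_{nn}\partial_x w + w\partial_x u - \partial_x^2 w = f \quad \text{in } (0,T]\times\Omega,
\end{align*}
with $w=0$ on $\partial\Omega$ and $w(0,\cdot) = u_{nn}(0,\cdot) - u(0,\cdot) =: g$. The plan is to run three successively stronger energy estimates by testing against $w$, $-\partial_x^2 w$, and (at the $H^2$ level) by differentiating the equation in $t$, testing against $w_t$, and then rearranging the equation to express $\partial_x^2 w$ in terms of $w_t$ and lower-order terms. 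In each case I will close the estimate with Gr\"onwall's inequality; the constants will pick up prefactors depending on the norms of $u$ and $u_{nn}$, which stay bounded on $[0,T]$ by the local well-posedness proposition and the assumed smoothness of $u_{nn}$.

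For the $L^2$-estimate, multiplying by $w$ and integrating yields $\int u_{nn} w \partial_x w = -\tfrac{1}{2}\int (\partial_x u_{nn})\, w^2$ after integration by parts (the boundary term vanishes), while $\int w^2\partial_x u$ is already in usable form. Combining these with $-\int (\partial_x^2 w) w = \|\partial_x w\|_{L^2}^2$ gives
\begin{align*}
\tfrac{1}{2}\tfrac{d}{dt}\|w\|_{L^2}^2 + \|\partial_x w\|_{L^2}^2 \lesssim \bigl(\|\partial_x u\|_{L^\infty} + \|\partial_x u_{nn}\|_{L^\infty}\bigr)\|w\|_{L^2}^2 + \|f\|_{L^2}\|w\|_{L^2},
\end{align*}
and Gr\"onwall delivers $\max_{0\leq t\leq T}\|w(t)\|_{L^2} \lesssim \|f\|_{L^2(0,T;L^2)} + \|g\|_{L^2}$, where the implicit constant depends on $\sup_{[0,T]}(\|u\|_{H^2}+\|u_{nn}\|_{H^2})$ through the 1D Sobolev embedding $H^2\hookrightarrow W^{1,\infty}$.

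For the $H^1_0$-estimate, I test against $-\partial_x^2 w$, which produces $\tfrac{1}{2}\tfrac{d}{dt}\|\partial_x w\|_{L^2}^2$ on the time-derivative side and $\|\partial_x^2 w\|_{L^2}^2$ on the diffusive side. The convective terms $\int u_{nn}(\partial_x w)(\partial_x^2 w)$ and $\int w(\partial_x u)(\partial_x^2 w)$ no longer integrate by parts into a good sign, but they can be bounded by $\|u_{nn}\|_{L^\infty}\|\partial_x w\|_{L^2}\|\partial_x^2 w\|_{L^2}$ and $\|\partial_x u\|_{L^\infty}\|w\|_{L^2}\|\partial_x^2 w\|_{L^2}$, respectively, and the $\|\partial_x^2 w\|_{L^2}$ factor is then absorbed into the diffusion via Young's inequality. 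After absorbing, Gr\"onwall yields the stated $H^1_0$ bound in terms of $\|f\|_{L^2(0,T;L^2)}$ and $\|g\|_{H^1_0}$.

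For the $H^2$-estimate, I differentiate the equation in $t$ and test against $w_t$, producing an energy identity for $\tfrac{d}{dt}\|w_t\|_{L^2}^2$ with nonlinear commutator terms such as $\int (\partial_t u_{nn})(\partial_x w) w_t$ and $\int w_t (\partial_t\partial_x u) w$; these are handled by expressing $u_t$ and $u_{nn,t}$ from the respective equations as $\partial_x^2 u - u\partial_x u$ and $\partial_x^2 u_{nn} - u_{nn}\partial_x u_{nn} + f$, both bounded in $L^\infty(0,T;L^2)$ by the earlier estimates plus $\|f\|_{L^\infty(0,T;L^2)}$. Once $\|w_t\|_{L^\infty(0,T;L^2)}$ is under control, rearranging the PDE as $\partial_x^2 w = w_t + u_{nn}\partial_x w + w\partial_x u - f$ and taking the $L^\infty(0,T;L^2)$ norm of both sides controls $\esssup_t\|w(t)\|_{H^2}$ by $\|f\|_{H^1(0,T;L^2)} + \|g\|_{H^2}$. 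The principal obstacle throughout is the nonlinearity: every bound acquires prefactors depending on the solution norms, so the estimates must be bootstrapped in order $L^2\Rightarrow H^1_0\Rightarrow H^2$ to avoid circular dependencies, and the initial-data term $g$ must be separately upgraded from the $L^2$ norm to $H^1_0$ and to $H^2$ at each successive step.
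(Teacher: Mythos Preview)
Your overall strategy matches the paper's: test against $w$, then $-w_{xx}$, then differentiate in $t$ and test against $w_t$, closing each level with Gr\"onwall and bootstrapping $L^2\Rightarrow H^1_0\Rightarrow H^2$. The one substantive difference is how you split the nonlinearity. You write
\[
u_{nn}\partial_x u_{nn}-u\partial_x u = u_{nn}\,\partial_x w + w\,\partial_x u,
\]
which forces your Gr\"onwall coefficients to carry $\|\partial_x u_{nn}\|_{L^\infty}$ (hence $\|u_{nn}\|_{H^2}$). The paper instead uses the symmetric decomposition
\[
u_{nn}\partial_x u_{nn}-u\partial_x u = w w_x + w u_x + u w_x,
\]
so that the cubic term $\int_\Omega w^2 w_x\,dx$ vanishes exactly (it is a total derivative on $[a,b]$ with $w=0$ at the endpoints), and the remaining terms are estimated via the 1D Morrey/Poincar\'e bound $\|u\|_\infty\lesssim\|u_x\|_2$. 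The upshot is that the paper's coefficients depend only on $\int_0^T\|u_x\|_2^2\,dt$ (and, at the $H^2$ level, $\int_0^T\|u_t\|_2^2\,dt$), with no reference to $u_{nn}$ at all. This is exactly what is needed to obtain $F_0,F_1,F_2$ as functions of the arguments listed in the theorem. Your version still yields the convergence conclusion (since $u_{nn}$ is smooth and fixed), but the paper's decomposition gives the sharper dependence claimed in the statement and avoids having the implicit constant drift with the network during training.
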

\begin{description}
\item[(1)] 
There exists a continuous function $$F_0=F_0\left(\Vert w(0,\cdot)\Vert_2^2,\int_0^T \Vert f\Vert_2^2 dt,\int_0^T\Vert\partial_x u\Vert_2^2 dt\right)$$  such that 
\begin{align*}
\sup_{0\leq t\leq T}\Vert w\Vert_2^2  +\int_0^T \Vert\partial_x w\Vert_2^2 dt \leq F_0\rightarrow 0,
\end{align*}
as $$\Vert w(0,\cdot)\Vert_2^2,\int_0^T \Vert f\Vert_2^2 dt \rightarrow 0.$$
\item[(2)] 
There exists a continuous function $$F_1=F_1\left(\Vert w(0,\cdot)\Vert_{H^1}^2,\int_0^T \Vert f\Vert_2^2 dt,\int_0^T\Vert\partial_x u\Vert_2^2 dt \right)$$  such that
\begin{align}
\sup_{0\leq t\leq T}\Vert w\Vert_{H^1}^2 +\int_0^T \Vert \partial_x w\Vert_{H^1}^2+\Vert \partial_t w\Vert_2^2 dt \leq F_1\rightarrow 0,
\end{align}
as $$\Vert w(0,\cdot)\Vert_{H^1}^2,\int_0^T \Vert f\Vert_2^2 dt \rightarrow 0.$$
\item[(3)] 
There exists a continuous function $$F_2=F_2\left(\Vert w(0,\cdot)\Vert_{H^2}^2,\int_0^T \Vert f\Vert_2^2+\Vert \partial_t f\Vert_2^2 dt,\sup_{0\leq t\leq T}\Vert\partial_x u\Vert_2^2+\int_0^T\Vert\partial_t u\Vert_2^2 dt \right)$$  such that
\begin{align}
\sup_{0\leq t\leq T}\left(\Vert w\Vert_{H^2}^2+\Vert w_t\Vert_2^2\right) +\int_0^T \Vert \partial_x w\Vert_{H^2}^2 dt \leq F_2\rightarrow 0,
\end{align}
as $$\Vert w(0,\cdot)\Vert_{H^2}^2,\int_0^T \Vert f\Vert_2^2+\Vert\partial_t f\Vert_2^2 dt \rightarrow 0.$$
\end{description}
\begin{remark}
By the Morrey's embedding theorem and the Poincare's inequality, for $f\in H^1_0(\Omega)$, we have the following inequality,
\begin{align}
\Vert f\Vert_{\infty}^2 \lesssim \Vert f\Vert_2^2+\Vert f_x\Vert_2^2 \lesssim \Vert f_x\Vert_2^2, \label{ee3}    
\end{align}
Throughout the proof, we widely use \eqref{ee3}.
\end{remark}
\begin{proof}
Subtracting \eqref{w7} from \eqref{w9}, we get equations of $w$ as follows.
\begin{align}
w_t-w_{xx}+ww_x+wu_x+uw_x&=f \quad \mathrm{in} \ \Omega, \label{w1}\\
w&=0\quad \mathrm{on} \ \partial\Omega, \\
w(0,\cdot)&=g \quad \mathrm{in} \ \Omega.
\end{align}
By multiplying $w$ to \eqref{w1} and integrating by parts in $\Omega$, we have
\begin{align}
\frac{1}{2}&\frac{d}{dt}\Vert w\Vert_2^2+\Vert w_x\Vert_2^2 \label{w2}\\
&=\int_\Omega f w-\int_\Omega w^2w_x-\int_\Omega w^2 u_x -\int_\Omega uww_x+\int_{\partial\Omega}ww_x, \nonumber \\
&=\sum_{k=1}^5 I_k^0. \nonumber 
\end{align}

Now we estimate the terms on the right hand side of \eqref{w2}. Applying the Young's inequality, the H\"older's inequality, the Sobolev inequality, and thd Poincare inequality, we have
\begin{align}
I^0_1 &\leq \Vert f\Vert_2\Vert w\Vert_2 \lesssim \Vert f\Vert_2^2+\epsilon\Vert w\Vert_2^2\lesssim \Vert f\Vert_2^2+\epsilon\Vert w_x\Vert_2^2 , \label{w4}\\
I^0_2 &=\int_{\partial\Omega}\frac{1}{3}w^3=\frac{1}{3}w^3(b)-\frac{1}{3}w^3(a)=0, \\
I^0_3 &\leq \Vert w\Vert_\infty\Vert w\Vert_2\Vert u_x\Vert_2\lesssim\Vert w_x\Vert_2\Vert w\Vert_2\Vert u_x\Vert_2, \\
&\lesssim\Vert u_x\Vert_2^2\Vert w\Vert_2^2+\epsilon\Vert w_x\Vert_2^2, \nonumber \\
I^0_4 &\leq \Vert u\Vert_\infty\Vert w\Vert_2\Vert w_x\Vert_2\lesssim \Vert u_x\Vert_2^2\Vert w\Vert_2^2+\epsilon\Vert w_x\Vert_2^2, \\
I^0_5&=w(b)w_x(b)-w(a)w_x(a)=0,\label{w5}
\end{align}
for any small $\epsilon>0$.
Applying estimates \eqref{w4}--\eqref{w5} to \eqref{w2}, we have the following inequality
\begin{align}
\frac{d}{dt}\Vert w\Vert_2^2+\Vert w_x\Vert_2^2\lesssim\Vert u_x\Vert_2^2\Vert w\Vert_2^2+\Vert f\Vert_2^2, \label{w6}
\end{align}
\eqref{w6} and the Gr\"onwall inequality imply that
\begin{align}
\sup_{0\leq t\leq T}\Vert w\Vert_2^2\lesssim e^{\int_0^T\Vert u_x\Vert_{2}^2 dt}\left(\Vert g\Vert_2^2+\int_0^T \Vert f\Vert_2^2 dt \right).
\end{align}
Let us denote $$
\int_0^T \Vert u_x\Vert_2^2 dt=\beta
$$
Now we integrate \eqref{w6} between $0$ and $T$ and drop the term $\Vert w(T)\Vert_2^2$ on the left-hand side to obtain
\begin{align}
\int_0^T\Vert w_x\Vert^2 dt &\lesssim \Vert g\Vert^2_2+\int_0^T \Vert u_x\Vert_2^2\Vert w\Vert^2_2+\Vert f\Vert_2^2 dt \label{w15}\\
&\lesssim (\beta e^\beta+1)\left(\Vert g\Vert_2^2+\int_0^T \Vert f\Vert_2^2 dt\right). \nonumber 
\end{align}
This completes the proof of (1) of Theorem \ref{burgers_proof}.

Next, by multiplying $-w_{xx}$ to \eqref{w1} and integrating by parts in $\Omega$, we obtain
\begin{align}
\frac{1}{2}&\frac{d}{dt}\Vert w_x\Vert_2^2+\Vert w_{xx}\Vert_2^2 \label{w3}\\
&=-\int_\Omega f w_{xx}+\int_\Omega w w_x w_{xx}+\int_\Omega u_x w w_{xx} +\int_\Omega u w_x w_{xx}+\int_{\partial\Omega}w_t w_x, \nonumber \\
&=\sum_{k=1}^5 I_k^1. \nonumber 
\end{align}
Similarly to \eqref{w4}--\eqref{w5},  we estimate the terms on the right hand side of
\eqref{w3}.
\begin{align}
I^1_1 &\leq \Vert f\Vert_2\Vert w_{xx}\Vert_2^2 \lesssim\Vert f\Vert_2^2 +\epsilon\Vert w_{xx}\Vert_2^2,\label{w11} \\
I^1_2& \leq\Vert w\Vert_\infty\Vert w_x\Vert_2\Vert w_{xx}\Vert_2 
\lesssim\Vert w_x\Vert^4+\epsilon\Vert w_{xx}\Vert_2^2, \\
I^1_3 &\leq \Vert w\Vert_\infty\Vert u_x\Vert_2\Vert w_{xx}\Vert_2 
\lesssim\Vert u_x\Vert_2^2\Vert w_x\Vert_2^2+\epsilon\Vert w_{xx}\Vert_2^2, \\
I^1_4 &\leq  \Vert u\Vert_\infty\Vert w_x\Vert_2\Vert w_{xx}\Vert_2 
 \lesssim\Vert u_x\Vert_2^2\Vert w_x\Vert_2^2+\epsilon\Vert w_{xx}\Vert_2^2, \\
I^1_5&=  w_t(b)w_x(b)-w_t(a)w_x(a)=0. \label{w12}
\end{align}
for any small $\epsilon>0$. Applying estimates \eqref{w11}--\eqref{w12} to \eqref{w3}, we have the following inequality
\begin{align}
\frac{d}{dt}\Vert w_x\Vert_2^2+\Vert w_{xx}\Vert_2^2\lesssim (\Vert w_x\Vert_2^2+\Vert u_x\Vert_2^2)\Vert w_x\Vert_2^2+\Vert f\Vert_2^2. \label{w13}
\end{align}
It follows from \eqref{w15}, \eqref{w13}, and the Gr\"onwall inequality that 
\begin{align}
\sup_{0\leq t\leq T}\Vert w_x\Vert_2^2 &\lesssim e^{\int_0^T \Vert w_x\Vert_2^2+\Vert u_x\Vert_{2}^2 dt}\left(\Vert g_x\Vert_2^2+\int_0^T \Vert f\Vert_2^2 dt \right) \label{w22} \\
&\lesssim e^\beta e^{F_0}\left(\Vert g_x\Vert_2^2+\int_0^T \Vert f\Vert_2^2 dt \right). \nonumber
\end{align}
In a similar way to \eqref{w15}, there exists a function $\Tilde{F_1} =F_1(\Vert f\Vert_{L^2(0,T;L^2)},\Vert g\Vert_{H^1},\beta)$ such that
\begin{align}
\int_0^T \Vert w_{xx}\Vert_2^2 dt \lesssim F_1. \label{w23}
\end{align}
(2) of Theorem \ref{burgers_proof} follows from \eqref{w22}, \eqref{w23}, and the fact that 
\begin{align}
w_t=f+w_{xx}-ww_x-wu_x-uw_x. \label{nc1}    
\end{align}

Finally, we differentiate \eqref{w1} with respect to $t$, then we obtain
\begin{align}
w_{tt}-w_{xxt}+w_tw_x+ww_{xt}+w_tu_x+wu_{xt}+u_tw_x+uw_{xt}=f_t & \quad \mathrm{in} \ \Omega, \label{w16}\\
w_t=0 &\quad \mathrm{on} \ \partial\Omega, \\
w_t(0)=f+g_{xx}-gg_x-gu_{0x}-u_0g_x & \quad \mathrm{in} \ \Omega. \label{w20}
\end{align}
By multiplying $w_t$ to \eqref{w16} and integrating by parts in $\Omega$, we have
\begin{align}
\frac{1}{2}\frac{d}{dt}&\Vert w_t\Vert_2^2+\Vert w_{xt}\Vert_2^2 \label{w17}\\
=&\int_\Omega f_t w_{t}-\int_\Omega w_t^2 w_x -\int_\Omega ww_tw_{xt} -\int_\Omega w_t^2 u_x \nonumber \\
&-\int_\Omega ww_tu_{xt} -\int_{\Omega}u_tw_tw_x-\int_{\Omega}uw_tw_{xt} +\int_{\partial\Omega}w_t w_{xt}, \nonumber \\
=&\sum_{k=1}^8 I_k^2. \nonumber 
\end{align}
Terms on the right hand side of \eqref{w17} are estimated by
\begin{align}
I_1^2 &\leq \Vert f_t\Vert_2\Vert w_t\Vert_2 \lesssim \Vert f_t\Vert_2^2+\epsilon\Vert w_{xt}\Vert_2^2, \label{w18}\\
I_2^2 &\leq \Vert w_t\Vert_\infty\Vert w_t\Vert_2\Vert w_x\Vert_2\lesssim \Vert w_x\Vert_2^2\Vert w_t\Vert_2^2+\epsilon\Vert w_{xt}\Vert_2^2, \\
I_3^2&\leq \Vert w\Vert_\infty\Vert w_t\Vert_2\Vert w_{xt}\Vert_2\lesssim \Vert w_x\Vert_2^2\Vert w_t\Vert_2^2+\epsilon\Vert w_{xt}\Vert_2^2, \\
I_4^2&\leq \Vert w_t\Vert_\infty\Vert w_t\Vert_2\Vert u_x\Vert_2\lesssim \Vert u_x\Vert_2^2\Vert w_t\Vert_2^2+\epsilon\Vert w_{xt}\Vert_2^2, \\
I_5^2+I_6^2&=\int_{\Omega}ww_{xt}u_t \leq \Vert w\Vert_\infty\Vert u_t\Vert_2\Vert w_{xt}\Vert_2\lesssim \Vert u_t\Vert_2^2\Vert w_t\Vert_2^2+\epsilon\Vert w_{xt}\Vert_2^2, \\
I_7^2&\leq \Vert u\Vert_2\Vert w_t\Vert_2\Vert w_{xt}\Vert_2\lesssim \Vert u_x\Vert_2^2\Vert w_t\Vert_2^2+\epsilon\Vert w_{xt}\Vert_2^2, \\
I_8^2&=0. \label{w19}
\end{align}
for any small $\epsilon>0$. Applying estimates \eqref{w18}--\eqref{w19} to \eqref{w17}, we have the following inequality
\begin{align}
\frac{d}{dt}\Vert w_t\Vert_2^2+\Vert w_{xt}\Vert_2^2\lesssim \left(\Vert w_x\Vert_2^2+\Vert u_x\Vert_2^2+\Vert u_t\Vert_2^2 \right)\Vert w_t\Vert_2^2+\Vert f_t\Vert_2^2. \label{w21}
\end{align}
It follows from \eqref{w20}, \eqref{w21} and the Gr\"onwall inequality that
\begin{align}
\sup_{0\leq t \leq T} \Vert w_t\Vert_2^2& \lesssim e^{\int_0^T \Vert w_x\Vert_2^2+\Vert u_x\Vert_2^2+\Vert u_t\Vert_2^2 dt }\left( \Vert w_t(0)\Vert_2^2+\int_0^T \Vert f_t\Vert_2^2\right) \label{nc2}\\
&\lesssim e^\gamma e^{F_0}\left( \Vert f_0\Vert_2^2+\Vert g_{xx}\Vert_2^2+ \Vert g_x\Vert_2^4+\Vert u_{0x}\Vert_2^2\Vert g_x\Vert_2^2+\int_0^T \Vert f_t\Vert_2^2 dt\right). \nonumber
\end{align}
where 
\begin{align*}
\gamma = \int_0^T \Vert u_x\Vert_2^2+\Vert u_t\Vert_2^2 dt.
\end{align*}
In a similar way to the proof of (2) of Theorem \ref{burgers_proof}, (3) of Theorem \ref{burgers_proof} follows from \eqref{nc1} and \eqref{nc2}. This completes the proof of the Theorem. 
\end{proof}

\subsection{The Fokker--Planck equation}
\subsubsection{Boundary loss design}Define the loss function for the periodic boundary condition as \begin{multline}\label{loss_bc}
\mathcal{L}_{BC} \\= \sum_{|\alpha|=1}\int_0^Tdt\int_{-5}^5 dv\left|\partial^\alpha_{t,x,v}f^{nn}(t,1,v;m,w,b)-\partial^\alpha_{t,x,v}f^{nn}(t,0,v;m,w,b)\right|^2\\ \approx \frac{1}{N_{i,k}}\sum_{|\alpha|=1,i,k} \left|\partial^\alpha_{t,x,v}f^{nn}(t_i,1,v_k;m,w,b)-\partial^\alpha_{t,x,v}f^{nn}(t_i,0,v_k;m,w,b)\right|^2.
\end{multline}

\subsubsection{The Fokker--Planck equation in a periodic interval}
In this section, we introduce an $L^2$ energy method for the Fokker--Planck equation and introduce a regularity inequality for the solutions to the equation. Throughout the section, we will abuse the notation and use both notations $\partial_zu$ and $u_z$ for the same derivative of $u$ with respect to $z$.

 We consider the Fokker--Planck equation in a periodic interval $[0,1]$:
\begin{equation}\label{FPeq1}
\begin{split}
u_t + vu_x - \beta (v u)_v  - q u_{vv} &= 0,\text{ for }(t,x,v)\in [0,T] \times [0, 1] \times \mathbb{R}, \\
u(0,x,v) &= u_0(x,v), \text{ for }(x,v)\in  [0, 1] \times \mathbb{R}, \text{ and }\\
\partial^\alpha_{t,x,v}u(t,1,v) - \partial^\alpha_{t,x,v}u(t, 0, v) &= 0,\text{ for }(t,v)\in [0,T]\times \mathbb{R}, 
\end{split}
\end{equation}for any 3-dimensional multi-index $\alpha$ such that $|\alpha|\le 1$ and a given initial distribution $u_0=u_0(x,v).$
Now we consider the Fokker--Planck equation that the corresponding neural network solution $u_{nn}$ would satisfy:
\begin{equation}\label{FPeq2}
\begin{split}
(u_{nn})_t + v(u_{nn})_x - \beta (v u_{nn})_v  - q (u_{nn})_{vv} &= f\text{ for }(t,x,v)\in [0,T] \times [0, 1] \times [-5,5], \\
u_{nn}(0,x,v) &= g, \text{ for }(x,v)\in  [0, 1] \times [-5,5], \\\sum_{|\alpha|=1}\int_0^Tdt\int_{-5}^5 dv\bigg|(\partial^\alpha_{t,x,v}u_{nn})(t,1,v) &- (\partial^\alpha_{t,x,v}u_{nn})(t, 0, v)\bigg|^2 \le L,
\end{split}
\end{equation}for any 3-dimensional multi-index $\alpha$ such that $|\alpha|\le 1$ and given $f=f(t,x,v)$, $g=g(x,v)$, and a constant $L>0$. Suppose that $f$, $g$ and $h$ are $C^1$ functions.  Also, we suppose that the a priori solutions $u$ and $u_{nn}$ are sufficiently smooth; indeed, we require them to be in $C^{1,1,2}_{t,x,v}.$

For the a priori solution $u$ and $u_{nn}$ to \eqref{FPeq1} and \eqref{FPeq2}, assume that if $|v|$ is sufficiently large, then we have that for some sufficiently small $\epsilon>0$, \begin{equation}\label{deriv 1} \sup_{t\in [0,T]} \left\|\partial^\alpha_{t,x,v} u(t,\cdot,\pm 5)-\partial^\alpha_{t,x,v} u_{nn}(t,\cdot,\pm 5)\right\|_{L^2_x([0,1])}\le \epsilon,\end{equation} for $|\alpha|\le 1$ and $\alpha= (0,0,2).$ Also, suppose that 
\begin{equation}\label{deriv 2}|\partial^\alpha_{t,x,v}u(t,x,\pm 5)|,|\partial^\alpha_{t,x,v}u_{nn}(t,x,\pm 5)|\le C,\end{equation}
for some $C<\infty$ for $|\alpha|\le 1$ and $\alpha= (0,0,2).$
Now we introduce the following theorem on the energy estimates:
\begin{theorem} \label{FP_proof_1}
	Let $u$ and $u_{nn}$ be the classical solutions to \eqref{FPeq1} and \eqref{FPeq2}, respectively. Then we have
\begin{multline*}	\sup_{0\le t\le T}\Vert u_{nn}(t)-u(t)\Vert_2^2+2(q-\varepsilon)\int_0^T\|\partial_v u_{nn}(s)-\partial_v u(s)\|^2_2ds\\
\leq  \left(\Vert g-u_0\Vert_2^2+\frac{L}{2}\right)\exp\left[\left(1+\frac{25\beta^2}{2\varepsilon}\right)T\right]+\int_0^T\Vert f(s)\Vert_2^2ds + 2q\epsilon CT,
\end{multline*} for any $\varepsilon \in (0,q)$, where $L,u_0, f,g,\beta, q, m,\epsilon,$ and $ C$ are given in \eqref{FPeq1}-\eqref{deriv 2}.
\end{theorem}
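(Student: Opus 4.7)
I would let $w := u_{nn} - u$. Subtracting \eqref{FPeq1} from \eqref{FPeq2}, $w$ satisfies the same kinetic Fokker--Planck equation
\begin{equation*}
w_t + vw_x - \beta(vw)_v - qw_{vv} = f,
\end{equation*}
with initial error $w(0,x,v) = g(x,v)-u_0(x,v)$ and an approximate periodic condition in $x$ whose $L^2_{t,v}$ mismatch is controlled by the boundary loss $L$. The strategy is a standard $L^2$ energy argument: multiply by $w$, integrate over $[0,1]\times[-5,5]$, integrate by parts in both $x$ and $v$ to isolate the dissipation $q\|\partial_v w\|_2^2$, and track all boundary contributions carefully.

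After the integration by parts, the transport term produces the $x$-boundary piece $\tfrac{1}{2}\int_{-5}^{5} v\,[w^2(t,1,v)-w^2(t,0,v)]\,dv$; the friction term expands algebraically as $-\beta\int(vw)_v\, w = -\beta\|w\|_2^2 - \beta\int vw\,\partial_v w$, producing a harmless negative $\|w\|_2^2$ contribution together with the only indefinite cross-term $-\beta\int vw\,\partial_v w$; and the diffusion term gives $q\|\partial_v w\|_2^2 - q\int_0^1[w\,\partial_v w]_{v=-5}^{v=5}\,dx$. By assumptions \eqref{deriv 1}--\eqref{deriv 2} the $v$-boundary piece from diffusion is of order $O(\epsilon C)$, which (after time integration) accounts for the additive $2q\epsilon CT$ remainder in the conclusion.

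I would then absorb the cross-term using Young's inequality: since $|v|\le 5$,
\begin{equation*}
\left|\beta v w\,\partial_v w\right| \le \tfrac{25\beta^2}{4\varepsilon}\,w^2 + \varepsilon\,(\partial_v w)^2,
\end{equation*}
so that $\varepsilon\|\partial_v w\|_2^2$ is taken out of the diffusion, leaving the coefficient $2(q-\varepsilon)$ on the dissipation (the extra factor of two comes from clearing the $\tfrac{1}{2}$ in front of $\tfrac{d}{dt}\|w\|_2^2$). The source is handled by $|\int fw| \le \tfrac{1}{2}(\|f\|_2^2 + \|w\|_2^2)$, which, after the same factor of two, contributes the ``$+1$'' inside the Gronwall exponent $\bigl(1+\tfrac{25\beta^2}{2\varepsilon}\bigr)$; the $-\beta\|w\|_2^2$ piece is negative and is simply dropped. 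Assembling everything produces
\begin{equation*}
\tfrac{d}{dt}\|w\|_2^2 + 2(q-\varepsilon)\|\partial_v w\|_2^2 \le \bigl(1+\tfrac{25\beta^2}{2\varepsilon}\bigr)\|w\|_2^2 + \|f\|_2^2 + (\text{boundary terms}),
\end{equation*}
after which Gronwall's inequality on $\|w\|_2^2$, with initial value $\|g-u_0\|_2^2$ and with the $x$-boundary contribution entering as $L/2$, delivers the stated bound on $\sup_{0\le t\le T}\|w(t)\|_2^2$.

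The main obstacle I anticipate is showing the $x$-boundary estimate $\tfrac{1}{2}\int_0^T\!\int_{-5}^{5} v\,[w^2(t,1,v) - w^2(t,0,v)]\,dv\,dt \lesssim L/2$ cleanly: as $L$ in \eqref{FPeq2} controls an $L^2_{t,v}$ mismatch of only \emph{first-order} derivatives of $u_{nn}$ across $x=0$ and $x=1$, one must factor $w^2(1)-w^2(0) = (w(1)+w(0))(w(1)-w(0))$, use the exact periodicity of $u$ to rewrite $w(1)-w(0) = u_{nn}(1) - u_{nn}(0)$, and then convert the derivative-level loss into a trace-level bound on this difference (or, equivalently, interpret the index set $|\alpha|=1$ in \eqref{FPeq2} as $|\alpha|\le 1$).
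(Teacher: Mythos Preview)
Your proposal matches the paper's proof exactly: multiply the equation for $w=u_{nn}-u$ by $w$, integrate by parts, absorb the cross-term $\beta v w\,\partial_v w$ via Young's inequality using $|v|\le 5$, bound the $v$-boundary pieces by $2q\epsilon C$ through \eqref{deriv 1}--\eqref{deriv 2}, and close with Gr\"onwall. The $x$-boundary subtlety you flag is real---the paper simply asserts $\int_0^t A(s)\,ds\le L/2$ from \eqref{FPeq2}$_3$ without the factoring step you outline, effectively reading the index set as $|\alpha|\le 1$ just as you suggest (and it also silently drops the harmless $-\beta w$ term when writing \eqref{FPeq3}, which you correctly retain and discard as a negative contribution).
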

\begin{proof}
Define $w\eqdef u_{nn}-u$. Then by \eqref{FPeq1} and \eqref{FPeq2}, $w$ satisfies
\begin{equation}\label{FPeq3}
\begin{split}
w_t + vw_x - \beta v w_v  - q w_{vv} &= f\text{ for }(t,x,v)\in [0,T] \times [0, 1] \times [-5,5], \\
w(0,x,v) &= w_0, \text{ for }(x,v)\in  [0, 1] \times [-5,5], 
\end{split}
\end{equation}
where $w_0\eqdef g-u_0.$	By multiplying $w$ to \eqref{FPeq3} and integrating with respect to $dxdv$, we have
	\begin{multline*}
		\frac{1}{2}\frac{d}{dt}\iint_{[0, 1] \times [-5,5]} |w|^2dxdv+\iint_{[0, 1] \times [-5,5]} vw_{x}wdxdv-\iint_{[0, 1] \times [-5,5]} qw_{vv}wdxdv\\=\iint_{[0, 1] \times [-5,5]} fwdxdv+\iint_{[0, 1] \times [-5,5]} \beta vw_v wdxdv.
	\end{multline*} Then we take the integration by parts and obtain that
	\begin{multline*}
	\frac{1}{2}\frac{d}{dt}\iint_{[0, 1] \times [-5,5]} |w|^2dxdv+\frac{1}{2}\int_{-5}^5 dv \ v(w(t,1,v)^2-w(t,0,v)^2)\\+q\iint_{[0, 1] \times [-5,5]} |w_v|^2dxdv\\=\iint_{[0, 1] \times [-5,5]} fwdxdv
	+\iint_{[0, 1] \times [-5,5]} \beta vw_v wdxdv\\+q\int_{[0, 1]} w_{v}(t,x,5)w(t,x,5)dx-q\int_{[0, 1]} w_{v}(t,x,-5)w(t,x,-5)dx\\
	\eqdef I_1+I_2+I_3+I_4.
	\end{multline*} We first define
	$$A(t)\eqdef \left|\frac{1}{2}\int_{-5}^5 dv \ v(w(t,1,v)^2-w(t,0,v)^2)\right|.$$%\le \frac{5}{2}\int_{-5}^5 dv |w(t,1,v)-w(t,0,v)|^2\le \frac{5L}{2}.$$ 
	We now estimate $I_1$-$I_4$ on the right-hand side. By the H\"older inequality and Young's inequality, we have
	$$|I_1|\le \Vert f\Vert_2\Vert w\Vert_2 \leq \frac{1}{2}\Vert f\Vert_2^2+\frac{1}{2}\Vert w\Vert_2^2,$$ where we denote 
	$$\|h\|_2 \eqdef \iint_{[0, 1] \times [-5,5]} |h|^2dxdv.$$ Similarly, we observe that
		$$|I_2|\le 5 \beta \Vert w_v\Vert_2\Vert w\Vert_2 \leq \varepsilon\Vert w_v\Vert_2^2+\frac{25\beta^2}{4\varepsilon}\Vert w\Vert_2^2,$$ for a sufficiently small $\varepsilon>0$ as $|v|\le 5.$
		By \eqref{deriv 1}, we have
		\begin{multline*}|I_3+I_4|\le q\|w_v(t,\cdot,5)\|_{L^2_x}\|w(t,\cdot,5)-w(t,\cdot,-5)\|_{L^2_x}\\		+ q\|w_v(t,\cdot,5)-w_v(t,\cdot,-5)\|_{L^2_x}\|w(t,\cdot,-5)\|_{L^2_x}\le 2q\epsilon C .
		\end{multline*}
		Altogether, we have
$$
	\frac{d}{dt}\|w\|^2_2+2(q-\varepsilon)\|w_v\|^2_2\\\le 
	\|f\|_2 + \left(1+\frac{25\beta^2}{2\varepsilon}\right) \|w\|^2_2+A(t)+ 2q\epsilon C. 
$$ We integrate with respect to the temporal variable on $[0,t]$ and obtain
\begin{multline*}
	\|w(t)\|^2_2+2(q-\varepsilon)\int_0^t\|w_v(s)\|^2_2ds\\\le 
	\|w(0)\|^2_2+\int_0^t\left(\|f(s)\|_2 + \left(1+\frac{25\beta^2}{2\varepsilon}\right) \|w(s)\|^2_2+A(s)+ 2q\epsilon C\right)ds. 
\end{multline*}
By \eqref{FPeq2}$_3$, we have
$$\int_0^tA(s)ds\le \frac{L}{2}.$$ 
Thus, by the Gr\"onwall inequality, we have
\begin{multline*}	\Vert w(t)\Vert_2^2+2(q-\varepsilon)\int_0^t\|w_v(s)\|^2_2ds\\
\leq  \left(\Vert w_0\Vert_2^2+\frac{L}{2}\right)\exp\left[\left(1+\frac{25\beta^2}{2\varepsilon}\right)t\right]+\int_0^t\Vert f(s)\Vert_2^2ds + 2q\epsilon Ct,
\end{multline*} where $ w_0(x,v)=g(x,v)-u_0(x,v).$
	This completes the proof for the theorem.
\end{proof}Regarding the derivatives $\partial_t w$ and $\partial_x w$ we can obtain the similar estimates as follows.
\begin{corollary} \label{FPcor1}
	Let $u$ and $u_{nn}$ be the classical solutions to \eqref{FPeq1} and \eqref{FPeq2}, respectively. Assume that \eqref{deriv 2} holds. Then for $z=t$ or $x$ we have
	\begin{multline*}	\sup_{0\le t\le T}\Vert \partial_zu_{nn}(t)-\partial_zu(t)\Vert_2^2+2(q-\varepsilon)\int_0^T\|\partial_v \partial_zu_{nn}(s)-\partial_v \partial_zu(s)\|^2_2ds\\
	\leq  \left(\Vert \partial_zg-\partial_zu_0\Vert_2^2+\frac{L}{2}\right)\exp\left[\left(1+\frac{25\beta^2}{2\varepsilon}\right)T\right]+\int_0^T\Vert \partial_zf(s)\Vert_2^2ds + 2q\epsilon CT,
	\end{multline*} for any $\varepsilon \in (0,q)$, where $L,u_0, f,g,\beta, q, m,\epsilon,$ and $ C$ are given in \eqref{FPeq1}-\eqref{deriv 2}.
\end{corollary}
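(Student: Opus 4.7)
The plan is to mimic the $L^2$ energy argument of Theorem \ref{FP_proof_1} applied to the differentiated equation, exploiting the fact that the coefficients of the Fokker--Planck operator depend only on $v$. Concretely, since the differential operator in \eqref{FPeq3} has coefficients $v$, $\beta v$, and $q$, which are all independent of $t$ and $x$, differentiating \eqref{FPeq3} in $z \in \{t,x\}$ yields the same Fokker--Planck equation for $w_z := \partial_z w$:
\begin{equation*}
(w_z)_t + v (w_z)_x - \beta v (w_z)_v - q (w_z)_{vv} = \partial_z f,
\end{equation*}
with $w_z(0,x,v) = \partial_z w_0$ for $z = x$, and with $w_t(0,x,v)$ determined from \eqref{FPeq3} at $t=0$ for $z = t$. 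The periodic boundary loss condition \eqref{FPeq2}$_3$ already controls precisely the $|\alpha|=1$ derivatives, so it is tailor-made for bounding the boundary terms that will arise when $w$ is replaced by $w_z$.

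The first step is to multiply the differentiated equation by $w_z$ and integrate over $[0,1]\times[-5,5]$, producing the energy identity
\begin{equation*}
\tfrac{1}{2}\tfrac{d}{dt}\|w_z\|_2^2 + q\|\partial_v w_z\|_2^2 = \underbrace{\iint \partial_z f \, w_z}_{J_1} + \underbrace{\iint \beta v (\partial_v w_z) w_z}_{J_2} + \underbrace{q\!\int_0^1 [(\partial_v w_z) w_z]_{v=-5}^{5} dx}_{J_3+J_4} - \tilde A_z(t),
\end{equation*}
where $\tilde A_z(t) := \tfrac{1}{2}\int_{-5}^{5} v\bigl(w_z(t,1,v)^2 - w_z(t,0,v)^2\bigr)\, dv$ comes from the $x$-boundary flux. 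The structure is identical to that of Theorem \ref{FP_proof_1}, so we can reuse the same estimation strategy term-by-term: $|J_1|\lesssim \|\partial_z f\|_2^2 + \|w_z\|_2^2$ by Young's, and $|J_2|\le \varepsilon\|\partial_v w_z\|_2^2 + \tfrac{25\beta^2}{4\varepsilon}\|w_z\|_2^2$ by Cauchy--Schwarz combined with $|v|\le 5$ and Young's with parameter $\varepsilon \in (0,q)$ so as to absorb a small fraction of the dissipation term on the left.

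For the two more delicate contributions, I would apply the same splitting trick used in the proof of Theorem \ref{FP_proof_1}:
\begin{equation*}
(\partial_v w_z) w_z \Big|_{-5}^{5} = (\partial_v w_z)(t,x,5)\bigl(w_z(t,x,5) - w_z(t,x,-5)\bigr) + \bigl((\partial_v w_z)(t,x,5) - (\partial_v w_z)(t,x,-5)\bigr) w_z(t,x,-5),
\end{equation*}
so that $|J_3 + J_4|$ reduces to products of $L^2_x$ norms that can be estimated by $\epsilon$ (via \eqref{deriv 1}) and by $C$ (via \eqref{deriv 2}), yielding $|J_3+J_4|\le 2q\epsilon C$. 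For the $x$-boundary integral, using that $u$ is exactly periodic and $u_{nn}$ satisfies \eqref{FPeq2}$_3$, the quantity $\int_0^T \tilde A_z(s)\,ds$ is controlled by $L/2$ because $\partial_z w$ is precisely an $|\alpha|=1$ derivative appearing in the boundary loss. Summing these bounds and invoking the Gr\"onwall inequality in exactly the form used at the end of the proof of Theorem \ref{FP_proof_1} closes the argument and produces the stated estimate.

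The main obstacle is the boundary term $J_3+J_4$: evaluating $\partial_v w_z$ and $w_z$ at $v=\pm 5$ involves mixed second-order spatial derivatives $\partial^\alpha w$ with $\alpha = (1,0,1)$ or $(0,1,1)$, which are not strictly covered by \eqref{deriv 1}-\eqref{deriv 2} as stated. In order to reuse the splitting above, one must either interpret \eqref{deriv 1}-\eqref{deriv 2} with the mild extension that the same uniform bounds hold for these mixed derivatives at $v=\pm 5$, or, alternatively, use the differentiated PDE itself to trade a $\partial_v \partial_z$ boundary trace for lower-order traces (which are directly controlled). With this mild strengthening — entirely natural given the smoothness of $u_{nn}$ and $u$ we have already assumed — the argument proceeds in complete parallel with Theorem \ref{FP_proof_1}.
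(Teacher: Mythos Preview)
Your proposal is correct and follows exactly the paper's approach: differentiate \eqref{FPeq3} in $z\in\{t,x\}$, observe that the resulting equation for $\partial_z w$ has identical form because the coefficients depend only on $v$, and then repeat the energy argument of Theorem~\ref{FP_proof_1} verbatim with $\partial_z w$ in place of $w$. Your observation that the $v=\pm 5$ boundary traces of $\partial_v\partial_z w$ are not literally covered by \eqref{deriv 1}--\eqref{deriv 2} is in fact sharper than the paper, which simply asserts ``the proof is the same'' without commenting on this point.
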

	\begin{proof}
		For both $\partial_z =\partial_t $ and $\partial_x$, we take $\partial_z$ onto \eqref{FPeq3} and obtain\begin{equation}\label{FPeq4}
		\begin{split}
	(\partial_z	w)_t + v(\partial_zw)_x - \beta v (\partial_zw)_v  - q (\partial_zw)_{vv} &= \partial_z f, \\ &\text{ for }(t,x,v)\in [0,T] \times [0, 1] \times [-5,5], \\
		(\partial_zw)(0,x,v) &= (\partial_zw)_0, \\ &\text{ for }(x,v)\in  [0, 1] \times [-5,5], 
		\end{split}
		\end{equation}
		where $(\partial_zw)_0\eqdef \partial_zg-\partial_zu_0.$ Then the proof is the same as the one for Theorem \ref{FP_proof_1} for $\partial_z w$ replacing the role of $w$. This completes the proof.
	\end{proof}

Finally, we can also obtain the regularity estimates for the derivative $\partial_vw$ as follows:
\begin{theorem} \label{FP_proof_2}
Let $u$ and $u_{nn}$ be the classical solutions to \eqref{FPeq1} and \eqref{FPeq2}, respectively. Assume that \eqref{deriv 2} holds. Then we have
\begin{multline*}\sup_{0\le t\le T}\Vert \partial_vu_{nn}(t)-\partial_v u(t)\Vert_2^2+2(q-\varepsilon)\int_0^T\|\partial_{vv}u_{nn}(s)-\partial_{vv} u(s)\|^2_2ds\\
\leq  (L+
\Vert \partial_xg-\partial_xu_0\Vert_2^2+\Vert \partial_v g-\partial_vu_0\Vert_2^2)\exp\left[\left(2+\frac{25\beta^2}{2\varepsilon}\right)T\right]\\+\int_0^T(\Vert \partial_xf(s)\Vert_2^2+\Vert \partial_v f(s)\Vert_2^2)ds +4q\epsilon CT,
\end{multline*} for any $\varepsilon \in (0,q)$, where $L,u_0, f,g,\beta, q, m,\epsilon,$ and $ C$ are given in \eqref{FPeq1}-\eqref{deriv 2}.
\end{theorem}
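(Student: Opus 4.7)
\textbf{Proof proposal for Theorem \ref{FP_proof_2}.}

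The plan is to mimic the energy method used in Theorem \ref{FP_proof_1} and Corollary \ref{FPcor1}, but applied to the equation satisfied by $\partial_v w$, where $w = u_{nn} - u$. The first step is to differentiate \eqref{FPeq3} in $v$. Using the product rule on $v w_x$ and $\beta v w_v$, I obtain
\begin{equation*}
(\partial_v w)_t + v (\partial_v w)_x - \beta v (\partial_v w)_v - q (\partial_v w)_{vv} = \partial_v f - \partial_x w + \beta\, \partial_v w,
\end{equation*}
with initial datum $(\partial_v w)(0,x,v) = \partial_v g(x,v) - \partial_v u_0(x,v)$. The key structural point is that the commutator between $\partial_v$ and the transport term $v\partial_x$ generates an extra source $-\partial_x w$ on the right-hand side; this is what forces both $\partial_x$- and $\partial_v$-initial data to appear in the final bound.

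Next I would multiply the above equation by $\partial_v w$ and integrate over $[0,1]\times[-5,5]$. Using integration by parts exactly as in the proof of Theorem \ref{FP_proof_1} produces (i) the dissipation term $q\|\partial_{vv}w\|_2^2$ from the diffusion, (ii) a spatial boundary contribution $\tfrac12\int_{-5}^5 v\bigl[(\partial_v w)^2\bigr]_{x=0}^{x=1}dv$, whose time-integral is controlled by $L/2$ via \eqref{FPeq2}$_3$, (iii) a $v$-boundary contribution at $v=\pm 5$ controlled by $2q\epsilon C$ thanks to the assumptions \eqref{deriv 1} and \eqref{deriv 2}, and (iv) the drift term $-\tfrac{\beta}{2}\|\partial_v w\|_2^2$ plus a $v$-boundary piece also bounded by \eqref{deriv 2}. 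The new right-hand side contributions $\int\partial_v f\cdot\partial_v w$, $\int\beta\,|\partial_v w|^2$, and $-\int \partial_x w\cdot\partial_v w$ are estimated by Young's inequality, producing in particular a term $\tfrac12\|\partial_x w\|_2^2$ on the right-hand side.

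The main obstacle is precisely the appearance of $\|\partial_x w\|_2^2$ on the right-hand side of the $\partial_v w$ energy inequality: closing the estimate in $\partial_v w$ alone is not possible. To handle it, my plan is to add the differential inequality just obtained to the one for $\partial_x w$ furnished by Corollary \ref{FPcor1} (with $z=x$), so that the combined quantity
\begin{equation*}
E(t) \eqdef \|\partial_x w(t)\|_2^2 + \|\partial_v w(t)\|_2^2
\end{equation*}
satisfies a closed inequality of the form
\begin{equation*}
\frac{d}{dt} E(t) + 2(q-\varepsilon)\bigl(\|\partial_v\partial_x w\|_2^2+\|\partial_{vv}w\|_2^2\bigr) \leq \Bigl(2+\frac{25\beta^2}{2\varepsilon}\Bigr) E(t) + \|\partial_x f\|_2^2 + \|\partial_v f\|_2^2 + A(t) + 4q\epsilon C,
\end{equation*}
where $A(t)$ denotes the spatial boundary contributions whose time integral is bounded by $L$ via \eqref{FPeq2}$_3$.

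Finally, I would apply Gr\"onwall's inequality on $[0,T]$ to $E(t)$ with initial value $\|\partial_x g - \partial_x u_0\|_2^2 + \|\partial_v g - \partial_v u_0\|_2^2$; keeping the dissipation term $2(q-\varepsilon)\int_0^T\|\partial_{vv}w\|_2^2\,ds$ on the left-hand side (the $\|\partial_v\partial_x w\|_2^2$ part can be discarded as a non-negative term) and using $\sup_{0\le t\le T}\|\partial_v w(t)\|_2^2 \le \sup_{0\le t\le T} E(t)$ yields exactly the stated bound, with the factor $\exp[(2+\tfrac{25\beta^2}{2\varepsilon})T]$ reflecting the summed Gr\"onwall constants from the two coupled estimates.
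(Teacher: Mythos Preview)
Your proposal is correct and follows essentially the same energy method as the paper: differentiate \eqref{FPeq3} in $v$, multiply by $\partial_v w$, integrate, and handle the commutator term $\|\partial_x w\|_2^2$ via the $\partial_x$-estimate of Corollary \ref{FPcor1}. The only difference is organizational: you sum the $\partial_x w$ and $\partial_v w$ differential inequalities and apply Gr\"onwall to the combined energy $E(t)=\|\partial_x w\|_2^2+\|\partial_v w\|_2^2$, whereas the paper applies Gr\"onwall to $\|\partial_v w\|_2^2$ alone, treating $\|\partial_x w\|_2^2$ as an inhomogeneous source and only afterward invoking Corollary \ref{FPcor1} to bound $\int_0^T\|\partial_x w(s)\|_2^2\,ds$.
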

\begin{proof}
	 we take $\partial_v$ onto \eqref{FPeq3} and obtain\begin{equation}\label{FPeq5}
	\begin{split}
	(\partial_v	w)_t + w_x+v(\partial_vw)_x &- \beta v (\partial_vw)_v  - q (\partial_vw)_{vv} = \partial_v f,\\&\text{ for }(t,x,v)\in [0,T] \times [0, 1] \times [-5,5],
	\end{split}
	\end{equation}
	where $(\partial_vw)_0\eqdef \partial_vg-\partial_vu_0.$ 	By multiplying $\partial_v w$ to \eqref{FPeq5} and integrating with respect to $dxdv$, we have
	\begin{multline*}
	\frac{1}{2}\frac{d}{dt}\iint_{[0, 1] \times [-5,5]} |\partial_v w|^2dxdv+\iint_{[0, 1] \times [-5,5]} v(\partial_vw)_{x}(\partial_vw)dxdv\\-\iint_{[0, 1] \times [-5,5]} q(\partial_v w)_{vv}\partial_v wdxdv\\=\iint_{[0, 1] \times [-5,5]} (-w_x+\partial_v f)\partial_v wdxdv+\iint_{[0, 1] \times [-5,5]} \beta v(\partial_v w)_v \partial_v wdxdv.
	\end{multline*}Then we take the integration by parts and obtain that
	\begin{multline*}
	\frac{1}{2}\frac{d}{dt}\iint_{[0, 1] \times [-5,5]} |w_v|^2dxdv\\+\frac{1}{2}\int_{  [-5,5]} \left(v(\partial_vw)^2(t,1,v)-v(\partial_vw)^2(t,0,v)\right)dv\\+q\iint_{[0, 1] \times [-5,5]} |w_{vv}|^2dxdv\\=\iint_{[0, 1] \times [-5,5]} \partial_v fw_vdxdv
	+\iint_{[0, 1] \times [-5,5]} \beta vw_{vv} w_vdxdv\\+q\int_{[0, 1]} w_{vv}(t,x,5)w_v(t,x,5)dx-q\int_{[0, 1]} w_{vv}(t,x,-5)w_v(t,x,-5)dx\\
	-\iint_{[0, 1] \times [-5,5]} w_x w_vdxdv
	\eqdef I_1+I_2+I_3+I_4+I_5.
	\end{multline*}  We first define
	$$B(t)\eqdef \left|\frac{1}{2}\int_{-5}^5 dv \ v(\partial_v w(t,1,v)^2-\partial_v w(t,0,v)^2)\right|.$$ 
	We now estimate $I_1$-$I_4$ on the right-hand side. 
	We now estimate $I_1$-$I_5$ on the right-hand side. By the H\"older inequality and Young's inequality, we have
	$$|I_1|\le \Vert \partial_v f\Vert_2\Vert w_v\Vert_2 \leq \frac{1}{2}\Vert \partial_v f\Vert_2^2+\frac{1}{2}\Vert w_v\Vert_2^2,$$ where we denote 
	$$\|h\|_2 \eqdef \iint_{[0, 1] \times [-5,5]} |h|^2dxdv.$$ Similarly, we observe that
	$$|I_2|\le 5 \beta \Vert w_{vv}\Vert_2\Vert w_v\Vert_2 \leq \varepsilon\Vert w_{vv}\Vert_2^2+\frac{25\beta^2}{4\varepsilon}\Vert w_v\Vert_2^2,$$ for a sufficiently small $\varepsilon>0$ as $|v|\le 5.$
	By \eqref{deriv 1} and \eqref{deriv 2}, we have
	\begin{multline*}|I_3+I_4|\le q\|w_{vv}(t,\cdot,5)\|_{L^2_x}\|w_v(t,\cdot,5)-w_v(t,\cdot,-5)\|_{L^2_x}\\
	+ q\|w_{vv}(t,\cdot,5)-w_{vv}(t,\cdot,-5)\|_{L^2_x}\|w_v(t,\cdot,-5)\|_{L^2_x} 
	\le 2q\epsilon C.\end{multline*}
	Finally, we have
	$$	|I_5|\le \|w_x\|_2\|w_v\|_2\le \frac{1}{2}\|w_x\|_2^2+\frac{1}{2}\|w_v\|_2^2.$$
	Altogether, we have
	$$
	\frac{d}{dt}\|w_v\|^2_2+2(q-\varepsilon)\|w_{vv}\|^2_2\\\le 
	\|\partial_v f\|_2 +\|w_x\|_2^2+ \left(2+\frac{25\beta^2}{2\varepsilon}\right) \|w_v\|^2_2+\frac{5L}{2}+2q\epsilon C.
	$$
	Then we take the integration with respect to the temporal variable on $[0,t]$ and obtain that
		\begin{multline*}
\|w_v(t)\|^2_2+\int_0^t ds\ 2(q-\varepsilon)\|w_{vv}(s)\|^2_2\le 
\|w_v(0)\|^2_2 \\+\int_0^t ds\ \left(	\|\partial_v f(s)\|_2 +\|w_x(s)\|_2^2+ \left(2+\frac{25\beta^2}{2\varepsilon}\right) \|w_v(s)\|^2_2+B(s)+2q\epsilon C\right).
	\end{multline*}
	By \eqref{FPeq2}$_3$, we have
	$$\int_0^t ds\ B(s)\le \frac{L}{2}.
	$$ Thus, by the Gr\"onwall inequality, we have
	\begin{multline}	\label{finalineq FP}\Vert w_v(t)\Vert_2^2+2(q-\varepsilon)\int_0^t\|w_{vv}(s)\|^2_2ds\\
	\leq  \left(\frac{L}{2}+\Vert \partial_v w_0\Vert_2^2\right)\exp\left[\left(2+\frac{25\beta^2}{2\varepsilon}\right)t\right]+\int_0^t(\|w_x(s)\|_2^2+\Vert \partial_v f(s)\Vert_2^2)ds +2q\epsilon C t,
	\end{multline} where $ \partial_v w_0(x,v)=g(x,v)-u_0(x,v).$
	Then we use Corollary \ref{FPcor1} for an upper-bound of $\|\partial_x w(s)\|_2^2$ and obtain that
		\begin{multline*}\int_0^T\|w_x(s)\|_2^2ds\\
	\leq   \left(\frac{L}{2}+\Vert \partial_xg-\partial_xu_0\Vert_2^2\right)\exp\left[\left(1+\frac{25\beta^2}{2\varepsilon}\right)T\right]+\int_0^T\Vert \partial_xf(s)\Vert_2^2ds +2q\epsilon CT.
	\end{multline*}
Then by \eqref{finalineq FP}, we obtain that
\begin{multline*}\sup_{0\le t\le T}\Vert w_v(t)\Vert_2^2+2(q-\varepsilon)\int_0^T\|w_{vv}(s)\|^2_2ds\\
\leq  (L+
\Vert \partial_xw_0\Vert_2^2+\Vert \partial_v w_0\Vert_2^2)\exp\left[\left(2+\frac{25\beta^2}{2\varepsilon}\right)T\right]\\+\int_0^T(\Vert \partial_xf(s)\Vert_2^2+\Vert \partial_v f(s)\Vert_2^2)ds +4q\epsilon CT,
\end{multline*} where $ \partial_v w_0(x,v)=\partial_v g(x,v)-\partial_v u_0(x,v).$
This completes the proof for the theorem.
\end{proof}

\subsection{The Poisson equation} \label{appendix_poisson} We consider the Poisson equation equation with Dirichlet boundary condition:
\begin{align}
    - \Laplace u &= f \quad \text{in } \Omega, \nonumber\\
    u &= g \quad \text{on } \partial \Omega.\nonumber
\end{align}
Suppose there exists \begin{equation}\label{tilde_cond}
    \tilde{g} \in H^2{(\bar{\Omega})} \text{ }s.t.\text{ } \tilde{g}\vert_{\partial \Omega} = g 
\end{equation} Then, the equation can be written by:
\begin{align}
    - \Laplace v &= \tilde{f} \quad \text{in } \Omega, \nonumber\\
    v &= 0 \quad \text{on } \partial \Omega,\nonumber
\end{align}
where $v = u - \tilde{g}, \tilde{f} = f-\Laplace \tilde{g}$. Therefore, we assume the homogeneous Dirichlet boundary condition provided \eqref{tilde_cond}.

Now, let u be a strong solution of 
\begin{align} \label{poisson_eqn}
    \begin{split}
        - \Laplace u &= f \quad \text{in } \Omega,\\
        u &= 0 \quad \text{on } \partial \Omega,
    \end{split}
\end{align} and let $u_{nn}$ be a neural network such that 
\begin{align} \label{poisson_nn}
    \begin{split}
        - \Laplace u_{nn} &= f_{nn} \quad \text{in } \Omega,\\
        u_{nn} &= 0 \quad \text{on } \partial \Omega.
    \end{split}
\end{align} Here, we can set the boundary to be zero by multiplying $B(x)$, where $B(x)$ is a smooth function satisfying $B(x) =\begin{cases}
0,  &x \in \partial\Omega \\
\neq 0,  &x \in \Omega
\end{cases}.$ By subtracting \eqref{poisson_nn} from \eqref{poisson_eqn}, we get 
\begin{align} \label{poisson_sub}
    \begin{split}
        - \Laplace (u-u_{nn}) &= (f-f_{nn}) \quad \text{in } \Omega,\\
        u-u_{nn} &= 0 \quad \text{on } \partial \Omega.
    \end{split}
\end{align}
Then we apply below theorem to \eqref{poisson_sub} to get the convergence results.
\begin{theorem}[Theorem 6.3.5 in \cite{evans10}]
    Let $m$ be a nonnegative integer. Suppose that $u \in H_0^1(\Omega)$ is a weak solution of the boundary-value problem \eqref{poisson_eqn}. Assume $\partial \Omega$ is $C^{m+2}$. Then, \begin{equation}
        \|u\|_{H^{m+2}(\Omega)} \leq C(\|f\|_{H^m(\Omega)} + \|u\|_{L^2(\Omega)}),
    \end{equation}
    Furthermore, if $u$ is the unique solution of \eqref{poisson_eqn}, then \begin{equation} \label{poisson_thm}
        \|u\|_{H^{m+2}(\Omega)} \leq C \|f\|_{H^m(\Omega)}.
    \end{equation}
\end{theorem}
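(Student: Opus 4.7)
My plan is to obtain the estimate by the classical elliptic regularity method: interior and boundary difference-quotient estimates for the base case $m=0$, an induction on $m$ to reach $H^{m+2}$, and a Poincar\'e/energy argument to remove the $\|u\|_{L^2(\Omega)}$ term in the unique-solution case. Since the theorem is classical, the structure is standard; the work lies in organizing it carefully.

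For the base case $\|u\|_{H^2(\Omega)}\le C(\|f\|_{L^2}+\|u\|_{L^2})$, I would first handle the interior. Fix a cutoff $\eta\in C_c^\infty(\Omega)$ equal to one on a compactly contained subdomain. For a unit vector $e_k$ and small $h$, define the difference quotient $D_k^h u(x)=(u(x+he_k)-u(x))/h$. In the weak form $\int_\Omega \nabla u\cdot\nabla\phi\,dx=\int_\Omega f\phi\,dx$, I would test with $\phi=-D_k^{-h}(\eta^2 D_k^h u)$; after moving one of the difference quotients onto $\nabla u$ and applying Young's inequality to absorb a gradient term, this produces a bound on $\|\eta\, D_k^h\nabla u\|_{L^2}$ that is uniform in $h$. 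The standard difference-quotient lemma then upgrades this to $\nabla u\in H^1_{\mathrm{loc}}$ with the required estimate.

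Near the boundary, I would straighten $\partial\Omega$ via a $C^{m+2}$ diffeomorphism on a finite cover, working locally in a half-ball. The same test-function argument is then legal for \emph{tangential} difference quotients $D_k^h$ with $k<n$, because $\eta^2 D_k^h u$ still has zero trace on the flat portion of the boundary. This controls every second derivative except $\partial_n^2 u$; the missing one is recovered algebraically from the flattened equation, namely $\partial_n^2 u=-\tilde f-\sum_{i<n}\partial_i^2 u$ plus lower-order terms coming from the change of variables. Combining the interior estimate, the finitely many boundary-chart estimates, and pulling back through the diffeomorphisms yields the base case.

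The induction $m\mapsto m+1$ proceeds by applying a tangential derivative $D^\alpha$ to the flattened equation: $D^\alpha u$ satisfies a Poisson-type problem whose right-hand side is controlled in $L^2$ by $\|f\|_{H^{|\alpha|}}+\|u\|_{H^{|\alpha|+1}}$, so the inductive hypothesis closes the loop. For the sharpened estimate \eqref{poisson_thm} when $u$ is the unique solution, test the equation against $u$ and invoke the Poincar\'e inequality on $H_0^1(\Omega)$ to conclude $\|u\|_{L^2}^2\lesssim\|\nabla u\|_{L^2}^2=\int_\Omega fu\,dx\le \|f\|_{L^2}\|u\|_{L^2}$, hence $\|u\|_{L^2}\lesssim\|f\|_{L^2}\le \|f\|_{H^m}$, letting us absorb the $\|u\|_{L^2}$ term. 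The main obstacle is the boundary step: carefully controlling the coefficients introduced by the straightening diffeomorphism so that the transformed equation retains a form amenable to the tangential difference-quotient test, and tracking how the constants transfer back under the chart decomposition and the induction. Beyond that, the argument is essentially bookkeeping.
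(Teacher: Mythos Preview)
Your sketch is the standard difference-quotient/boundary-straightening argument, and it is correct. However, the paper does not give its own proof of this statement: it is quoted verbatim as Theorem~6.3.5 from Evans's textbook and invoked as a black box to deduce $\|u-u_{nn}\|_{H^{m+2}}\le C\|f-f_{nn}\|_{H^m}$. So there is no ``paper's proof'' to compare against; your proposal is essentially the proof Evans gives, which the paper is citing.
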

By applying \eqref{poisson_thm} to \eqref{poisson_sub}, we obtain \begin{equation}
    \|u-u_{nn}\|_{H^{m+2}(\Omega)} \leq C \|f-f_{nn}\|_{H^m(\Omega)}. \nonumber
\end{equation}
where the right hand side corresponds to $Loss_{GE}(u_{nn}; m,2)$.

\section*{Acknowledgments}
H. Son is supported by National Research Foundation of Korea (NRF) grants funded by the Korean government (MSIT) (No. NRF-2019R1A5\allowbreak A1028324). J. W. Jang is supported by CRC 1060 through the German  Science Foundation (DFG) and by Basic Science Research Institute Fund, whose NRF grant number is 2021R1A6A1A10042944. H. J. Hwang is supported by the National Research Foundation of Korea (NRF)
grant funded by the Korea government (MSIT) (NRF-2017R1E1A1A03070105, NRF-2019R1A5A1028324).

\bibliographystyle{siamplain}
\bibliography{references}

\newpage

\end{document}

% --- supplement: ex_supplement.tex ---

\maketitle

\section{A detailed example}

Here we include some equations and theorem-like environments to show
how these are labeled in a supplement and can be referenced from the
main text.
Consider the following equation:
\begin{equation}
  \label{eq:suppa}
  a^2 + b^2 = c^2.
\end{equation}
You can also reference equations such as \cref{eq:matrices,eq:bb} 
from the main article in this supplement.

\lipsum[100-101]

\begin{theorem}
  An example theorem.
\end{theorem}

\lipsum[102]
 
\begin{lemma}
  An example lemma.
\end{lemma}

\lipsum[103-105]

Here is an example citation: \cite{KoMa14}.

\section[Proof of Thm]{Proof of \cref{thm:bigthm}}
\label{sec:proof}

\lipsum[106-112]

\section{Additional experimental results}
\Cref{tab:foo} shows additional
supporting evidence. 

\begin{table}[htbp]
{\footnotesize
  \caption{Example table}  \label{tab:foo}
\begin{center}
  \begin{tabular}{|c|c|c|} \hline
   Species & \bf Mean & \bf Std.~Dev. \\ \hline
    1 & 3.4 & 1.2 \\
    2 & 5.4 & 0.6 \\ \hline
  \end{tabular}
\end{center}
}
\end{table}

\bibliographystyle{siamplain}
\bibliography{references}